\def\w*lim{\mathop{\mbox{\textup{w*-lim}}}}
\newtheorem{theorem}{\sc Theorem}[section]
\newtheorem{corollary}[theorem]{\sc Corollary}
\newtheorem{proposition}[theorem]{\sc Proposition}
\newtheorem{remark}[theorem]{\sc Remark}
\newtheorem{definition}[theorem]{\sc Definition}
\newcounter{cnt1}
\newcounter{cnt2}
\newcounter{cnt3}
\newcounter{cnt4}
\newcommand{\blr}{\begin{list}{$($\roman{cnt1}$)$} {\usecounter{cnt1}
 \setlength{\topsep}{0pt} \setlength{\itemsep}{0pt}}}
\newcommand{\blR}{\begin{list}{\Roman{cnt4}.\ } {\usecounter{cnt4}
 \setlength{\topsep}{0pt} \setlength{\itemsep}{0pt}}}
\newcommand{\bla}{\begin{list}{$($\alph{cnt2}$)$} {\usecounter{cnt2}
 \setlength{\topsep}{0pt} \setlength{\itemsep}{0pt}}}
\newcommand{\bln}{\begin{list}{$($\arabic{cnt3}$)$} {\usecounter{cnt3}
 \setlength{\topsep}{0pt} \setlength{\itemsep}{0pt}}}
\newcommand{\el}{\end{list}}
\newcommand{\cA}{{\mathcal A}}
\newcommand{\cF}{{\mathcal F}}
\newcommand{\cH}{{\mathcal H}}
\newcommand{\cK}{{\mathcal K}}
\newcommand{\cM}{{\mathcal M}}
\newcommand{\cO}{{\mathcal O}}
\newcommand{\cP}{{\mathcal P}}
\newcommand{\supp}{{\mbox{supp}}}
\newcommand{\orb}{{\mbox{Orb}}}
\newcommand{\cKO}{\cK\cO}
\newcommand{\nn}[1]{{\left\vert\kern-0.25ex\left\vert\kern-0.25ex\left\vert #1
    \right\vert\kern-0.25ex\right\vert\kern-0.25ex\right\vert}}
\begin{document}

\title[Interpolation between $L_0(\cM,\tau)$ and $L_\infty(\cM,\tau)$]{Interpolation between $L_0(\cM,\tau)$ and $L_\infty(\cM,\tau)$}

\author[J. Huang]{J. Huang}
\address[Jinghao Huang]{School of Mathematics and Statistics, University of New South Wales, Kensington, 2052, NSW, Australia  \emph{E-mail~:} {\tt
jinghao.huang@unsw.edu.au}}

%\author[G. Levitina]{G. Levitina} \address[Galina Levitina]{School of Mathematics and Statistics, University of New South Wales, Kensington, 2052, NSW, Australia  \emph{E-mail~:} {\tt g.levitina@unsw.edu.au} }

\author[F. Sukochev]{F. Sukochev}
\address[Fedor Sukochev]{School of Mathematics and Statistics, University of New South Wales, Kensington, 2052, NSW, Australia  \emph{E-mail~:} {\tt f.sukochev@unsw.edu.au}
}

\subjclass[2010]{46L10, 46E30, 47A57. \hfill Version~: \today.}

\keywords{interpolation; orbits; $\cK$-orbits; symmetrically  $\Delta$-normed spaces; von Neumann algebras.}

%%%%%%%%%%%%%%%%%%%%%%%%%%%%%%%%%%%%%%%%%%%%%%%%%%%%%%%%%%%%%%
%%%%%%%%%%%%%               ABTRACT             %%%%%%%%%%%%%%
%%%%%%%%%%%%%%%%%%%%%%%%%%%%%%%%%%%%%%%%%%%%%%%%%%%%%%%%%%%%%%
\begin{abstract}
Let $\cM$ be a semifinite von Neumann algebra with a faithful semifinite normal trace $\tau$.
We show   that  the symmetrically  $\Delta$-normed operator space $E(\cM,\tau)$ corresponding to an arbitrary symmetrically $\Delta$-normed function space $E(0,\infty)$ is an interpolation  space between $L_0(\cM,\tau)$ and $\cM$,
which is in contrast with the classical result that there exist symmetric operator spaces $E(\cM,\tau)$  which are not interpolation spaces between $L_1(\cM,\tau)$ and $\cM$.
Besides, we show that the $\cK$-functional of  every  $X\in L_0(\cM,\tau)+\cM$  coincides with   the $\cK$-functional of its generalized singular value function $\mu(X)$.
Several  applications are given, e.g., it is shown that the pair $(L_0(\cM,\tau),\cM)$ is $\cK$-monotone when $\cM$ is a  non-atomic finite factor.
\end{abstract}

\maketitle
%%%%%%%%%%%%%%%%%%%%%%%%%%%%%%%%%%%%%%%%%%%%%%%%%%%%%%%%%%%%%%
%%%%%%%%%%%%%             Introduction          %%%%%%%%%%%%%%
%%%%%%%%%%%%%%%%%%%%%%%%%%%%%%%%%%%%%%%%%%%%%%%%%%%%%%%%%%%%%%

%%%%%%%%%%%%%%%%%%%%%%%%%%%%%%%%%%%%%%%%%%%%%%%%%%%%%%%%%%%%%%%%%%%%%%%%%%%%%%%%%%%
%%%%%%%%%%%%%%%%%%%%%%%%%%%%%%%%%%%%%%%%%%%%%%%%%%%%%%%%%%%%%%%%%%%%%%%%%%%%%%%%%%%
%%%%%%%%%%%%%%%%%%%%%%%%%%%%%%%%%%%%%%%%%%%%%%%%%%%%%%%%%%%%%%%%%%%%%%%%%%%%%%%%%%%

\section{Introduction}

Recall
the Calkin correspondence between symmetrically  $\Delta$-normed operator spaces and symmetrically  $\Delta$-normed function spaces introduced in \cite{HLS}.
Let $E(0,\infty)$ be an arbitrary  symmetrically $\Delta$-normed function space equipped with a $\Delta$-norm $\|\cdot\|_E$ (see Section \ref{prel}) and let $\mathcal{M}$ be an arbitrary semifinite von Neumann algebra equipped with a faithful normal semifinite trace $\tau$.
Then,
\begin{equation*}
E(\mathcal{M},\tau):=\{X\in S(\mathcal{M},\tau):\ \mu(X)\in E(0,\infty)\},
\ \|X\|_{E(\mathcal{M},\tau)}:=\|\mu(X)\|_E
\end{equation*}
 is a symmetrically  $\Delta$-normed operator space, where $S(\cM,\tau)$ is the space of all $\tau$-measurable operators affiliated with $\cM$.
Moreover, if $E(0,\infty)$ is complete, then $E(\cM,\tau)$ is also complete (see \cite[Theorem 3.8]{HLS}, see also \cite{Kalton_S, Sukochev} for the Banach case and the quasi-Banach case).
For brevity, we omit below the term ``operator'' and refer just to symmetrically  $\Delta$-normed spaces.

Let $A(\cM,\tau)$ and $B(\cM,\tau)$ be two symmetrically  $\Delta$-normed spaces.
A symmetrically  $\Delta$-normed  space $E(\cM,\tau)$ is said to be \emph{intermediate} for   $A(\cM,\tau)$ and $B(\cM,\tau)$ if the continuous embeddings
$$A(\cM,\tau)\cap B(\cM,\tau) \subset E(\cM,\tau) \subset A(\cM,\tau)+ B(\cM,\tau)$$
hold.
Let   $E(\cM,\tau)$ be  a symmetrically  $\Delta$-normed space  intermediate between $A(\cM,\tau)$ and $B(\cM,\tau)$.
If every linear operator on $A(\cM,\tau)+ B(\cM,\tau)$
which is bounded from $A(\cM,\tau)$ to $A(\cM,\tau)$ and $B(\cM,\tau)$ to $B(\cM,\tau)$ is also a bounded operator from  $E(\cM,\tau)$ to $E(\cM,\tau)$, then
$E(\cM,\tau)$ is called an \emph{interpolation} space between the spaces   $A(\cM,\tau) $ and $ B(\cM,\tau)$.

Interpolation function spaces have been widely investigated (see e.g. \cite{AK,Astashkin,HM, KPS, Rotfeld, Bergh_L,AM,Mali1992}) since Mityagin \cite{Mityagin} and Calder\'{o}n \cite{Calderon} gave characterizations of the class of all interpolation spaces with respect to $(L_1(0,\infty),L_\infty(0,\infty))$ (see also \cite{DDS2014,DDP} for results in the noncommutative setting).
Among several real interpolation methods,
the K-method of interpolation linked to  the so-called $\cK$-functional is very important (we refer \cite{MP1991,Mali1992,Astashkin,STZ} for applications of $\cK$-functionals in different areas).
Calculating  the $\cK$-functionals for a given couple of spaces is very \mbox{important}
 in the K-method \cite{Mali}.
In \cite{HM}, the $\cK$-functionals for the couple $L_0(0,\infty)$ and $L_\infty(0,\infty)$ are obtained.
We give a description of the $\cK$-functional  of every element $X\in (L_0+L_\infty)(\cM,\tau)$ in terms of singular value function as well as in terms of its distribution function,
showing that
the $\cK$-functional of $X$ coincides with the $\cK$-functional of its generalized singular value function $\mu(X)$ (see Section \ref{prel}), which extends  \cite[Proposition 3]{HM}.

It is well-known (see e.g. \cite{Mityagin,Calderon}, see also \cite{KPS,DPS,DDP}) that the  operator space $E(\cM,\tau)$ corresponding to a  fully symmetric (see e.g Section \ref{imbedding}) function space $E(0,\infty)$ is an  interpolation space between $L_1(\cM,\tau)$ and $\cM$.
In particular,
there exist symmetric normed spaces  which are not interpolation spaces between $L_1(\cM,\tau)$ and $L_\infty(\cM,\tau)$ (see \cite[Chapter II, $\S$ 4.2 and $\S$  5.7]{KPS}).
However, in this paper, it is shown that if we consider $L_0(\cM,\tau)$ (the set of all $\tau$-measurable operators having finite-trace support) instead of $L_1(\cM,\tau)$,
 then
  the operator space $E(\cM,\tau)$  corresponding to  an arbitrary symmetrically $\Delta$-normed function space $E(0,\infty)$ is necessarily an  interpolation space between $L_0(\cM,\tau)$ and $\cM$,
which is a noncommutative version of  results in  \cite{HM} (see also \cite{Astashkin}).

As an  application of the previous result,
we describe the
 orbits and $\cK$-orbits for an arbitrary $A \in S(\cM,\tau)$ in the last section.
 It is shown that   the unit balls of $\cK$-orbits do not coincide with the unit balls of orbits in the pair $(L_0(\cM,\tau),\cM)$, which generalises \cite[Theorem 4]{Astashkin}.
 In \cite{Astashkin},
 it is asserted that  the commutative pair $(L_0(0,\infty),L_\infty(0,\infty))$ is not $\cK$-monotone, that is,
$\cK$-orbits do not necessarily coincide with orbits in the pair $(L_0(0,\infty),L_\infty(0,\infty))$.
However, it is known that this assertion is incorrect and  $(L_0(0,\infty),L_\infty(0,\infty))$ is indeed $\cK$-monotone (see e.g. Section \ref{O}).
A non-commutative version of this result is established, that is,  the  pair $(L_0(\cM,\tau),\cM)$ is $\cK$-monotone  in the setting when $\cM$ is a non-atomic finite factor.
We would like to thank Professor Astashkin for providing us with the proof for the commutative pair $(L_0(0,\infty),L_\infty(0,\infty))$.

\section{Preliminaries}\label{prel}

\subsection{Generalized singular value functions}$\\$
%In this section, we recall main notions of the theory of noncommutative integration and introduce some properties of generalized singular value.
In what follows,  $\cH$ is a Hilbert space and $B(\cH)$ is the
$*$-algebra of all bounded linear operators on $\cH$, and
$\mathbf{1}$ is the identity operator on $\cH$.
Let $\mathcal{M}$ be
a von Neumann algebra on $\cH$.
For details on von Neumann algebra
theory, the reader is referred to e.g. \cite{Dixmier}, \cite{KR1, KR2}
or \cite{Tak}. General facts concerning measurable operators may
be found in \cite{Nelson}, \cite{Se} (see also \cite[Chapter
IX]{Ta2} and the forthcoming book \cite{DPS}).
For convenience of the reader, some of the basic
definitions are recalled.

A linear operator $X:\mathfrak{D}\left( X\right) \rightarrow \cH $,
where the domain $\mathfrak{D}\left( X\right) $ of $X$ is a linear
subspace of $\cH$, is said to be {\it affiliated} with $\mathcal{M}$
if $YX\subseteq XY$ for all $Y\in \mathcal{M}^{\prime }$, where $\mathcal{M}^{\prime }$ is the commutant of $\mathcal{M}$. A linear
operator $X:\mathfrak{D}\left( X\right) \rightarrow \cH $ is termed
{\it measurable} with respect to $\mathcal{M}$ if $X$ is closed,
densely defined, affiliated with $\mathcal{M}$ and there exists a
sequence $\left\{ P_n\right\}_{n=1}^{\infty}$ in the logic of all
projections of $\mathcal{M}$, $\cP\left(\mathcal{M}\right)$, such
that $P_n\uparrow \mathbf{1}$, $P_n(\cH)\subseteq\mathfrak{D}\left(X\right) $
and $\mathbf{1}-P_n$ is a finite projection (with respect to $\mathcal{M}$)
for all $n$. It should be noted that the condition $P_{n}\left(
\cH\right) \subseteq \mathfrak{D}\left( X\right) $ implies that
$XP_{n}\in \mathcal{M}$. The collection of all measurable
operators with respect to $\mathcal{M}$ is denoted by $S\left(
\mathcal{M} \right) $, which is a unital $\ast $-algebra
with respect to strong sums and products (denoted simply by $X+Y$ and $XY$ for all $X,Y\in S\left( \mathcal{M%
}\right) $).

Let $X$ be a self-adjoint operator affiliated with $\mathcal{M}$.
We denote its spectral measure by $\{E^X\}$. It is well known that if
$X$ is a closed operator affiliated with $\mathcal{M}$ with the
polar decomposition $X = U|X|$, then $U\in\mathcal{M}$ and $E\in
\mathcal{M}$ for all projections $E\in \{E^{|X|}\}$. Moreover,
$X\in S(\mathcal{M})$ if and only if $X$ is closed, densely
defined, affiliated with $\mathcal{M}$ and $E^{|X|}(\lambda,
\infty)$ is a finite projection for some $\lambda> 0$. It follows
immediately that in the case when $\mathcal{M}$ is a von Neumann
algebra of type $III$ or a type $I$ factor, we have
$S(\mathcal{M})= \mathcal{M}$. For type $II$ von Neumann algebras,
this is no longer true. From now on, let $\mathcal{M}$ be a
semifinite von Neumann algebra equipped with a faithful normal
semifinite trace $\tau$.

An operator $X\in S\left( \mathcal{M}\right) $ is called $\tau$-measurable if there exists a sequence
$\left\{P_n\right\}_{n=1}^{\infty}$ in $P\left(\mathcal{M}\right)$ such that
$P_n\uparrow \mathbf{1},$ $P_n\left(\cH\right)\subseteq \mathfrak{D}\left(X\right)$ and
$\tau(\mathbf{1}-P_n)<\infty $ for all $n$.
The collection of all $\tau $-measurable
operators is a unital $\ast $-subalgebra of $S\left(
\mathcal{M}\right) $ denoted by $S\left( \mathcal{M}, \tau\right)
$. It is well known that a linear operator $X$ belongs to $S\left(
\mathcal{M}, \tau\right) $ if and only if $X\in S(\mathcal{M})$
and there exists $\lambda>0$ such that $\tau(E^{|X|}(\lambda,
\infty))<\infty$.
Alternatively, an unbounded operator $X$
affiliated with $\mathcal{M}$ is  $\tau$-measurable (see
\cite{FK}) if and only if
$$\tau\left(E^{|X|}\bigl(\frac1n,\infty\bigr)\right)=o(1),\quad n\to\infty.$$

\begin{definition}\label{mu}
Let a semifinite von Neumann  algebra $\mathcal M$ be equipped
with a faithful normal semi-finite trace $\tau$ and let $X\in
S(\mathcal{M},\tau)$. The generalized singular value function $\mu(X):t\rightarrow \mu(t;X)$ of
the operator $X$ is defined by setting
$$
\mu(s;X)
=
\inf\{\|XP\|_\infty:\ P=P^*\in\mathcal{M}\mbox{ is a projection,}\ \tau(\mathbf{1}-P)\leq s\}.
$$
\end{definition}
An equivalent definition in terms of the
distribution function of the operator $X$ is the following. For every self-adjoint
operator $X\in S(\mathcal{M},\tau),$ setting
$$d_X(t)=\tau(E^{X}(t,\infty)),\quad t>0,$$
we have (see e.g. \cite{FK})
\begin{align}\label{dis}
\mu(t; X)=\inf\{s\geq0:\ d_{|X|}(s)\leq t\}.\end{align}

Consider the algebra $\mathcal{M}=L^\infty(0,\infty)$ of all
Lebesgue measurable essentially bounded functions on $(0,\infty)$.
Algebra $\mathcal{M}$ can be seen as an abelian von Neumann
algebra acting via multiplication on the Hilbert space
$\mathcal{H}=L^2(0,\infty)$, with the trace given by integration
with respect to Lebesgue measure $m.$
It is easy to see that the
algebra of all $\tau$-measurable operators
affiliated with $\mathcal{M}$ can be identified with
the subalgebra $S(0,\infty)$ of the algebra of Lebesgue measurable functions which consists of all functions $x$ such that
$m(\{|x|>s\})$ is finite for some $s>0$. It should also be pointed out that the
generalized singular value function $\mu(x)$ is precisely the
decreasing rearrangement $\mu(x)$ of the function $x$ (see e.g. \cite{KPS}) defined by
%The notation $\mu(x)$ stands for the non-increasing right-continuous
%rearrangement of $x\in S$ given by
$$\mu(t;x)=\inf\{s\geq0:\ m(\{|x|\geq s\})\leq t\}.$$
The two-sided ideal $\cF(\cM,\tau)$ in $\cM$ consisting of all elements of $\tau$-finite rank is defined by setting
$$\cF(\cM,\tau) =\{X \in \cM: \tau(r(X)) <\infty \} =\{X \in \cM: \tau(s(X)) <\infty \}. $$

For convenience of the reader we also recall the definition of the measure topology $t_\tau$ on the algebra $S(\cM,\tau)$.
For every $\varepsilon,\delta>0,$ we define the set $$V(\varepsilon,\delta)=\{X\in S(\mathcal{M},\tau):\ \exists P\in P\left(\mathcal{M}\right)\mbox{ such that } \|X(\mathbf{1}-P)\|\leq\varepsilon,\ \tau(P)\leq\delta\}.$$
The topology generated by the sets $V(\varepsilon,\delta)$, $\varepsilon,\delta>0,$ is called the measure topology $t_\tau$ on $S(\cM,\tau)$ \cite{DPS, FK, Nelson}.
It is well known that the algebra $S(\cM,\tau)$ equipped with the measure topology is a complete metrizable topological algebra \cite{Nelson} (see also \cite{Muratov}).
A sequence $\{X_n\}_{n=1}^\infty\subset S(\cM,\tau)$ converges to zero with respect to measure topology $t_\tau$ if and only if $\tau\big(E^{|X_n|}(\varepsilon,\infty)\big)\to 0$ as $n\to \infty$ for all $\varepsilon>0$ \cite{DPS,DP2}.

%A non-empty subset $W$ of $S(\cM,\tau)$ is called \emph{bounded} with respect to the measure topology if for any $\varepsilon,\delta>0$, there exists $\lambda >0$ such that $W\subset \lambda V(\varepsilon,\delta)$.

%Another important vector topology on $S(\tau)$ is the local measure topology. For convenience we denote by $P_f(\cM)$ the collection of all $\tau$-finite projections in $\cM$, that is the set of all $E\in P(\cM)$ satisfying $\tau(E) <\infty$.
%A neighbourhood base for this topology is given by the sets $V(\varepsilon, \delta; P)$, $\varepsilon, \delta>0$, $P\in P_f(\cM)$, where $$V(\varepsilon,\delta; P) =\{ X\in S(\tau)~: ~PXP \in V(\varepsilon, \delta)\}.$$
%Obviously, local measure topology is weaker than measure topology\cite{DP2}. We note here, that the local measure topology used in the present paper differs from the local measure topology defined in e.g. \cite{BCS,BCS_2014}.

\subsection{$\Delta$-normed spaces}
$\\$
For convenience of the reader, we recall the definition of $\Delta$-norm.
Let $\Omega$ be a linear space over the field $\mathbb{C}$.
A function $\|\cdot\|$ from $\Omega$ to $\mathbb{R}$ is a $\Delta$-norm, if for all $x,y \in \Omega$ the following properties hold:
\begin{itemize}
     \item[(1)] $\|x\| \geqslant 0$, $\|x\| = 0 \Leftrightarrow x=0$;
     \item[(2)] $\|\alpha x\| \leqslant \|x\|$ for all $|\alpha| \le1$;
     \item[(3)] $\lim _{\alpha \rightarrow 0}\|\alpha x\| = 0$;
     \item[(4)] $\|x+y\| \le C_\Omega \cdot (\|x\|+\|y\|)$ for a constant $C_\Omega\geq 1$ independent of $x,y$.
\end{itemize}
The couple $(\Omega, \left\|\cdot \right\|)$ is called a $\Delta$-normed space.
We note that the definition of a $\Delta$-norm given above is the same with that given in \cite{KPR}.
It is well-known that every $\Delta$-normed space $(\Omega, \left\| \cdot \right\|)$ is metrizable  and conversely every metrizable space can be equipped with a $\Delta$-norm  \cite{KPR}.
%A $\Delta$-norm is called an F-norm if it satisfies $\|x+y\| \leqslant \|x\|+\|y\|$, $x,y\in \Omega$.
Note that properties $(2)$ and $(4)$ of a $\Delta$-norm imply that for any $\alpha\in\mathbb{C}$, there exists a constant $M$ such that $\|\alpha x\|\leq M\|x\|,\, x\in \Omega$, in particular, if $\|x_n\|\to 0, \{x_n\}_{n=1}^\infty\subset \Omega$, then $\|\alpha x_n\|\to 0$.

%
%If a $\Delta$-normed space $(\Omega, \|\cdot\|)$ is complete  invariant metric defined by $d(x,y):=\|x-y\|$, then we call $(\Omega, \|\cdot\|)$ an F-space \cite{Kalton1985}.
%In fact, for every $\Delta$-norm, there exists an equivalent F-norm giving the same topology\cite[Theorem 1.2]{Kalton1985}. Namely, if $\left( \Omega,||\cdot||_\Delta \right)$ is a $\Delta$-normed space with a constant $C >1$, then setting
%\begin{align*}
%||x|| := \inf_{n\in\mathbb{N}^+} \left( \inf \sum_{i=1}^{n} ||x_i||^p_\Delta : \sum_{i=1}^{n} x_i = x , x_i\in \Omega  \right),~ x\in \Omega,\quad C=2^{1/p},
%\end{align*}
%we obtain an equivalent F-norm.

Let $E(0,\infty)$  be a space of real-valued Lebesgue measurable
functions on  $(0,\infty)$ (with identification
$m$-a.e.), equipped with a $\Delta$-norm $\left\| \cdot \right\|_E$.
The space $E(0,\infty)$ is said to be {\it
absolutely solid} if $x\in E(0,\infty)$ and $|y|\leq |x|$, $y\in S(0,\infty)$
implies that $y\in E(0,\infty)$ and $\|y\|_E\leq\|x\|_E.$
An absolutely solid space $E(0,\infty)\subseteq S(0,\infty)$ is said to be {\it
symmetric} if for every $x\in E(0,\infty)$ and every $y\in S(0,\infty)$,
 the assumption
$\mu(y)=\mu(x)$ implies that $y\in E(0,\infty)$ and $\|y\|_E=\|x\|_E$ (see e.g.
\cite{KPS}).

We now come to the definition of the main object of this paper.
\begin{definition}\label{opspace}
Let a semifinite von Neumann  algebra $\mathcal M$ be equipped
with a faithful normal semi-finite trace $\tau$.
Let $\mathcal{E}$ be a linear subset in $S({\mathcal{M}, \tau})$
equipped with a $\Delta$-norm $\left\| \cdot \right\|_{\mathcal{E}}$. We say that
$\mathcal{E}$ is a \textit{symmetrically $\Delta$-normed operator space}  if $X\in
\mathcal{E}$ and every $Y\in S({\mathcal{M}, \tau})$ the
assumption $\mu(Y)\leq \mu(X)$ implies that $Y\in \mathcal{E}$ and
$\|Y\|_\mathcal{E}\leq \|X\|_\mathcal{E}$.
\end{definition}

One should note that a symmetrically $\Delta$-normed space $E(\cM,\tau)$ does not necessarily satisfy
$$\|AXB\|_E \le \|A\|_\infty \|B\|_\infty \|X\|_E, ~A, B\in \cM,~ X\in E(\cM,\tau).$$

%The fact that every (normed) symmetric operator space $\mathcal{E}$ is (anabsolutely solid) $\mathcal{M}$-bimodule of $S\left(\mathcal{M},\tau \right)$ is well known (see e.g. \cite{Kalton_S, SCh_90} and references therein).

It is clear that in the special case, when $\cM=L_\infty(0,1)$, or $\cM=L_\infty(0,\infty)$, or $\cM=l_\infty$, the definition of symmetrically $\Delta$-normed operator spaces coincides with  definition of the symmetric function (or sequence) spaces.
In the case, when $\mathcal{M}=B(H)$ and $\tau$
is a standard trace ${\rm Tr}$, we shall call a symmetrically $\Delta$-normed operator
space introduced in Definition \ref{opspace} a symmetrically $\Delta$-normed operator ideal (for the symmetrically normed ideals we refer to \cite{GK1, GK2,
Simon}).

As mentioned before, the operator space $E(\cM,\tau)$ defined by
\begin{equation*}
E(\mathcal{M},\tau):=\{X\in S(\mathcal{M},\tau):\ \mu(X)\in E(0,\infty)\},
\ \|X\|_{E(\mathcal{M},\tau)}:=\|\mu(X)\|_E
\end{equation*}
 is a complete symmetrically   $\Delta$-normed operator space  whenever the symmetrically  $\Delta$-normed function space $E(0,\infty)$ equipped with a $\Delta$-norm $\|\cdot\|_E$  is    complete  \cite{HLS}.

\section{$(L_0+L_\infty)(\cM,\tau) = S(\cM,\tau)$}
By $L_0(0,\infty)$ we denote the space of all measurable functions on $(0,\infty)$ whose support has finite measure. This space is endowed with the group-norm \cite{HM}
$$\|f\|_{L_0} = m(\supp (f)), $$
where $\supp (f) = \{t\in (0,\infty):f(t)\ne 0\}$.
%By $L_\infty(0,\infty)$ we denote the Banach space of all essentially bounded functions endowed with the norm $\|f\|_\infty = \mbox{ess}_{t\in (0,\infty)} \sup |f(t)|$.
It is clearly that the corresponding operator space $L_0(\cM,\tau)$ is the subspace of $S(\cM,\tau)$ which consists of all operators $X$ such that $\tau(s(X))<\infty$.
It is easy to see that  $(L_0+L_\infty) (\cM,\tau)$ coincides with $S(\cM,\tau)$.
For the sake of completeness, we present a brief proof below.
\begin{proposition}\label{prop:0infty}
$(L_0+L_\infty) (\cM,\tau) =S(\cM,\tau)$.
\end{proposition}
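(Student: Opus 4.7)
The plan is to check the two inclusions separately. The inclusion $(L_0+L_\infty)(\cM,\tau) \subseteq S(\cM,\tau)$ is immediate, since $L_0(\cM,\tau)\subseteq S(\cM,\tau)$ by definition, $\cM\subseteq S(\cM,\tau)$, and $S(\cM,\tau)$ is a linear space. Therefore the real work is to produce, for an arbitrary $X\in S(\cM,\tau)$, a decomposition $X=Y+Z$ with $Y\in L_0(\cM,\tau)$ and $Z\in\cM$.

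The natural candidate decomposition is the spectral cutoff of $|X|$. Since $X$ is $\tau$-measurable, the preliminaries recall that there exists $\lambda>0$ with $\tau\bigl(E^{|X|}(\lambda,\infty)\bigr)<\infty$. Set
$$
Q := E^{|X|}(\lambda,\infty), \qquad P := \mathbf{1}-Q = E^{|X|}[0,\lambda],
$$
so that $P,Q\in\cP(\cM)$, $Q$ is $\tau$-finite, and both $P,Q$ commute with $|X|$. Using the polar decomposition $X=U|X|$ with $U\in\cM$, I write $X=XP+XQ$.

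First I would show $XP\in\cM$. Since $P$ is a spectral projection of $|X|$ supported on $[0,\lambda]$, one has $|X|P\leq \lambda P\leq \lambda\mathbf{1}$, so $|X|P\in\cM$, and therefore $XP=U(|X|P)\in\cM$. Second, I would show $XQ\in L_0(\cM,\tau)$. Because $Q$ commutes with $|X|$, a direct computation gives $|XQ|^2 = QX^*XQ = Q|X|^2Q = |X|^2 Q$, and hence $|XQ|=|X|Q$; in particular the right support satisfies $r(XQ)=s(|X|Q)\leq Q$. Since $\tau(Q)<\infty$, we obtain $\tau(r(XQ))\leq \tau(Q)<\infty$, so $XQ\in L_0(\cM,\tau)$ by the definition recalled just before the proposition.

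Combining these two facts yields $X=XP+XQ\in\cM+L_0(\cM,\tau)=(L_0+L_\infty)(\cM,\tau)$, which closes the reverse inclusion. There is no real obstacle here beyond keeping track of the convention for the support projection; the only slightly technical point is the identification $|XQ|=|X|Q$, which follows because $Q$ is a spectral projection of $|X|$ and therefore commutes with it.
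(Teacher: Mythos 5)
Your proof is correct, but it takes a different route from the paper. The paper also starts from the fact that $\tau$-measurability gives some $\lambda>0$ with $\tau\bigl(E^{|X|}(\lambda,\infty)\bigr)<\infty$, but then passes immediately to the commutative setting: using the identity $d_{\mu(X)}(\lambda)=\tau\bigl(E^{|X|}(\lambda,\infty)\bigr)$ it concludes that $\mu(X)\in (L_0+L_\infty)(0,\infty)$ by the known function-space result \cite[Proposition 3]{HM}, and hence $X\in (L_0+L_\infty)(\cM,\tau)$ via the Calkin-type description of the sum space in terms of singular value functions. You instead build the decomposition directly in the algebra: the spectral cutoff $X=XP+XQ$ with $Q=E^{|X|}(\lambda,\infty)$, checking $XP=U(|X|P)\in\cM$ with $\||X|P\|_\infty\le\lambda$ and $|XQ|=|X|Q$, so $r(XQ)\le Q$ is $\tau$-finite and $XQ\in L_0(\cM,\tau)$. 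Your argument is more self-contained (no appeal to \cite{HM} or to the distribution identity) and gives the quantitative bounds $\|XQ\|_0\le\tau(Q)$, $\|XP\|_\infty\le\lambda$; in fact it is the same cutoff decomposition the paper later exploits in the proof of Proposition \ref{prop:3.3}, so nothing is lost. The paper's reduction, on the other hand, is shorter on the page and fits its general strategy of transferring statements through $\mu(\cdot)$. The only point worth keeping straight in your write-up is the support convention: you bound the right support $r(XQ)$, while the paper's definition of $L_0(\cM,\tau)$ is phrased via $s(\cdot)$; since left and right supports are equivalent projections, the traces agree and the conclusion stands.
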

\begin{proof}
Since  $ (L_0+L_\infty) (\cM,\tau) \subset S(\cM,\tau)$, it suffices to prove $ S(\cM,\tau)\subset (L_0+L_\infty) (\cM,\tau) $.
For any operator $X\in S(\cM,\tau)$, there exists $\lambda >0$ such that $t:= \tau(E^{|X|}(\lambda,\infty))<\infty$.
By  \cite[Chapter III, Eq. (4)]{DPS}, we have
\begin{align*}
d_{\mu(X)}(\lambda)= \tau(E^{|X|}(\lambda,\infty)) =t .
\end{align*}
This  together with \cite[Proposition 3]{HM} implies that  $\mu(X)\in (L_0+L_\infty) (0,\infty)$.
Thus, $X\in (L_0+L_\infty) (\cM,\tau)$.
\end{proof}

In \cite[Proposition 3]{HM}, the description of the $\cK$-functional (which is a $\Delta$-norm on $S(0,\infty)$) $K_u(f)=\inf\{\|g\|_0 +u\|h\|_\infty : f=g+h, g\in L_0(0,\infty), h\in L_\infty(0,\infty)\}$, $u>0$,
of any  $f\in S(0,\infty)$  is given in terms of its distribution function and its singular value function.
That is, for every $f\in S(0,\infty)$, we have
\begin{align}\label{ineq:K}
K_u(f)= \inf_{s>0} [su + d_{\mu(f)}(s)]= \inf_{t>0} [ t+ u \mu(t;f)].
\end{align}
Similarly, for every $X\in S(\cM,\tau)$, the $\cK$-functional is defined by
$$K_u(X) : =\inf\{\|G\|_0 +u\|H\|_\infty : X=G+H,G\in L_0(\cM,\tau), H\in L_\infty(\cM,\tau)\}, u>0. $$
In particular, we define a $\Delta$-norm (see e.g. Remark \ref{remark:tri}) on $S(\cM,\tau)$ by
$$\|X\|_S:= K_1 (X)= \inf\{\|G\|_0 +\|H\|_\infty : X=G+H,G\in L_0(\cM,\tau), H\in L_\infty(\cM,\tau)\}$$
 for any $X\in S(\cM,\tau)$.
The following result complements an earlier result from \cite{DDP} for the pair $(L_1(\cM,\tau),L_\infty(\cM,\tau))$.
\begin{proposition}\label{prop:3.3}
For every $X\in S(\cM,\tau)$, we have
$$K_u(X)  =K_u(\mu(X)), ~u>0. $$
In particular, $\|X\|_S= \inf_{s>0} [s + d_{\mu(X)}(s)]= \inf_{t>0} [ t+ \mu(t;X)]$.
Moreover, $S(\cM,\tau)$ is a complete $\Delta$-normed space with respect to the $\Delta$-norm $K_u$ for every $u>0$.
\end{proposition}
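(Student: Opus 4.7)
The plan is to establish $K_u(X) = K_u(\mu(X))$ by proving both inequalities separately, then deduce the explicit formula for $\|\cdot\|_S$ by specialising to $u=1$ and invoking \eqref{ineq:K}. Completeness is obtained by verifying the $\Delta$-norm axioms for $K_u$ and then identifying the $K_u$-topology with the measure topology $t_\tau$ on $S(\cM,\tau)$.

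For the upper bound $K_u(X) \le K_u(\mu(X))$, I would use spectral truncation. For each $s>0$ with $d_{|X|}(s)<\infty$, set
$$G_s := X\, E^{|X|}(s,\infty), \qquad H_s := X\, E^{|X|}[0,s].$$
Then $X=G_s+H_s$, $\|H_s\|_\infty\le s$, and the right support of $G_s$ is dominated by $E^{|X|}(s,\infty)$, so $\|G_s\|_0 \le \tau(E^{|X|}(s,\infty)) = d_{|X|}(s) = d_{\mu(X)}(s)$. Hence $K_u(X) \le d_{\mu(X)}(s)+us$ for every such $s$, and passing to the infimum together with \eqref{ineq:K} yields $K_u(X) \le K_u(\mu(X))$.

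For the reverse inequality, take an arbitrary decomposition $X=G+H$ with $G\in L_0(\cM,\tau)$ and $H\in\cM$. The Fack--Kosaki sub-additivity of generalised singular values, with second index zero, gives
$$\mu(t;X) \le \mu(t;G) + \mu(0;H) = \mu(t;G) + \|H\|_\infty, \qquad t>0.$$
Since $\mu(t;G)=0$ for every $t\ge \|G\|_0$, we obtain $\mu(t;X)\le\|H\|_\infty$ for such $t$, which by \eqref{dis} forces $d_{\mu(X)}(\|H\|_\infty) \le \|G\|_0$. Applying \eqref{ineq:K} with $s=\|H\|_\infty$ now gives $K_u(\mu(X)) \le \|G\|_0 + u\|H\|_\infty$, and taking the infimum over all admissible decompositions yields $K_u(\mu(X)) \le K_u(X)$.

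Once the identity $K_u(X)=\inf_{t>0}[t+u\mu(t;X)]$ is in hand, the $\Delta$-norm axioms for $K_u$ are routine: positivity, the scaling property, and continuity at $0$ reduce to properties of $\mu(X)$, while the triangle inequality holds even with constant $1$ by adding decompositions $X=G_1+H_1$, $Y=G_2+H_2$ and using $s(G_1+G_2)\le s(G_1)\vee s(G_2)$. To transport completeness from $(S(\cM,\tau),t_\tau)$, I identify the $K_u$-topology with $t_\tau$: if $X_n\to 0$ in measure, then $d_{|X_n|}(\varepsilon)\to 0$ for every $\varepsilon>0$ and the first inequality gives $K_u(X_n)\le d_{|X_n|}(\varepsilon)+u\varepsilon$, whence $\limsup K_u(X_n)\le u\varepsilon$; conversely, if $K_u(X_n)\to 0$, selecting near-optimal $t_n$ with $t_n+u\mu(t_n;X_n)\to 0$ forces both $t_n$ and $\mu(t_n;X_n)$ to $0$, and \eqref{dis} then delivers $d_{|X_n|}(\varepsilon)\le t_n$ for large $n$, giving convergence in measure. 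The main technical point I expect is the careful bookkeeping around the spectral truncation, in particular controlling the support projection of $G_s$ by $E^{|X|}(s,\infty)$ through the polar decomposition of $X$; once that is secured, the rest of the argument is a systematic reduction to the commutative case treated in \cite{HM}.
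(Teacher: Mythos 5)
Your argument is correct, and its first half (the estimate $K_u(X)\le K_u(\mu(X))$ via the spectral truncation $X=XE^{|X|}(s,\infty)+XE^{|X|}[0,s]$) is essentially the paper's proof, though your direct bound $\|XE^{|X|}(s,\infty)\|_0\le\tau(E^{|X|}(s,\infty))=d_{|X|}(s)$ via the right support is a little cleaner than the chain of distribution-function identities used in the text. The two halves where you diverge are the reverse inequality and completeness. For $K_u(\mu(X))\le K_u(X)$ the paper applies the distance formula $\mu(t;X)=\inf\{\|X-Y\|_\infty: Y\in L_0(\cM,\tau),\ \|Y\|_0\le t,\ X-Y\in\cM\}$ (\cite[Chapter III, Proposition 2.20]{DPS}) with $t=\|G\|_0$, obtaining $t+u\mu(t;X)\le\|G\|_0+u\|X-G\|_\infty$ directly; you instead use Fack--Kosaki subadditivity $\mu(t;G+H)\le\mu(t;G)+\mu(0;H)$ together with $\mu(t;G)=0$ for $t\ge\|G\|_0$ to get $d_{\mu(X)}(\|H\|_\infty)\le\|G\|_0$ and then invoke \eqref{ineq:K}. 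The two facts are close relatives, but your route is more elementary and self-contained, at the cost of a small edge case ($\|H\|_\infty=0$, where $s=\|H\|_\infty$ is not admissible in \eqref{ineq:K} and one passes to $s\to0^+$), while the paper's is a one-line citation. For completeness the paper simply quotes the Calkin-correspondence transfer theorem \cite[Theorem 3.8]{HLS} together with the completeness of $(S(0,\infty),K_u)$; you verify the $F$-norm axioms (exactly as in Remark \ref{remark:tri}) and transfer completeness from the measure topology, in effect reproving Proposition \ref{prop:SandM} for general $u>0$. This works, but note that completeness is not a purely topological invariant, so you should say explicitly why topological coincidence suffices: since $d(X,Y)=K_u(X-Y)$ is translation invariant, a $K_u$-Cauchy sequence is $t_\tau$-Cauchy, hence has a $t_\tau$-limit $X$ by Nelson's theorem, and then your sequential equivalence applied to $X_n-X$ gives $K_u(X_n-X)\to0$. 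With that sentence added, your proof is complete and avoids citing \cite{HLS} altogether, whereas the paper's completeness argument is shorter but leans on the general transfer machinery.
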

\begin{proof}
Firstly, for every $X\in S(\cM,\tau)$, we have
\begin{align*}
K_u(X)&= \inf\{\|G\|_0 +u\|H\|_\infty : X=G+H, G\in L_0(\cM,\tau), H\in L_\infty(\cM,\tau)\}\\
&\le \inf_{ s>0 } \{\| X E^{|X|}(s,\infty)\|_0 +u\|X E^{|X|}(0,s]\|_\infty  \} \\
&\le  \inf_{ s>0 } \{ d_{ |X E^{|X|}(s,\infty)|}(0  ) +us  \} \\
&= \inf_{ s>0 } \{ d_{\mu( X E^{|X|}(s,\infty))}(0  ) +us  \}  ~\quad \mbox{(by \cite[Chapter III, Eq. (4)]{DPS})}  \\
&= \inf_{ s>0 } \{ d_{\mu( X E^{|X|}(s,\infty))}(s  ) +us  \} \\
&\le  \inf_{ s>0 }\{d_{\mu(X)}(s) +us \} \\
& \stackrel{\eqref{ineq:K} }{=} K_u(\mu(X)).
\end{align*}
Conversely,
for every $G\in  L_0(\cM,\tau) $ with $t:= \|G \|_0 $ and $X-G\in L_\infty(\cM,\tau)$, by \cite[Chapter III, Proposition 2.20]{DPS},  we have
\begin{align*}
&~\quad  t+  u \mu(t;X)\\
& =  t+  u\cdot  \inf\{\|X-Y\|_\infty : Y\in L_0(\cM,\tau), \|Y\|_0\le t, X-Y\in \cM \} \\
 & \le  \|G\|_0  +  u \| X - G \|_\infty .
\end{align*}
 Hence, we obtain
 $$ \inf_{t>0}[t+  u \mu(t;X) ]\le  \inf _{\substack{G\in L_0(\cM,\tau)\\ X- G\in L_\infty(\cM,\tau)}}( \|G\|_0  +  u \| X - G \|_\infty )=K_u(X) .$$
 The fact that $S(0,\infty)$ is complete with respect with $K_u$ together with  \cite[Theorem 3.8]{HLS} implies the completeness of $S(\cM,\tau)$ with respect with $K_u$.
\end{proof}

\begin{remark}
Recall that $d_{\mu(X)}(s)=d_{|X|}(s)$ (see e.g. \cite[Chapter III, Eq. (4)]{DPS}).
For every $X\in S(\cM,\tau)$, the $\cK$-functional can be also defined by the formula
\begin{align*}
K_u(X) = \inf_{s>0} [su + d_{|X|}(s)],~ u>0.
\end{align*}
\end{remark}

\begin{remark}\label{remark:tri}
It is still unknown  whether the Calkin correspondence preserves the constant $C_E$ for an arbitrary symmetrically  $\Delta$-normed  function space $E(0,\infty)$ (see \cite{HLS,Sukochev}).
However, it is well-known that  $(S(0,\infty),K_u(\cdot))$, $u>0$, is an $F$-space (i.e., a complete $\Delta$-normed space with $C_E=1$) and
Proposition \ref{prop:3.3} implies that
 for every $u>0$,
 $K_u(\mu( \cdot))$
  is not only a $\Delta$-norm but also an $F$-norm on $S(\cM,\tau)$.
Indeed, for every $X,Y\in S(\cM,\tau)$, by Proposition \ref{prop:3.3}, we have
\begin{align*}
&~\quad K_u(\mu(X)) +K_u(\mu(Y))\\
&= K_u(X) +K_u(Y)\\
& =\inf\{\|X_1\|_0 +u\|X_2\|_\infty : X=X_1+X_2,X_1\in L_0(\cM,\tau), X_2\in L_\infty(\cM,\tau)\} \\
&~\quad +\inf\{\|Y_1\|_0 +u\|Y_2\|_\infty : Y=Y_1+Y_2,Y_1\in L_0(\cM,\tau), Y_2\in L_\infty(\cM,\tau)\}\\
& =\inf\{\|X_1\|_0 +\|Y_1\|_0 +u\|X_2\|_\infty +u\|Y_2\|_\infty : \\
&~\quad X=X_1+X_2, Y=Y_1+Y_2, X_1,Y_1\in L_0(\cM,\tau), X_2,Y_2\in L_\infty(\cM,\tau)\}\\
& \ge \inf\{\|X_1+Y_1\|_0 +u\|X_2+ Y_2\|_\infty : \\
&~\quad X=X_1+X_2, Y=Y_1+Y_2, X_1,Y_1\in L_0(\cM,\tau), X_2,Y_2\in L_\infty(\cM,\tau)\}\\
& \ge \inf\{\|Z_1\|_0 +u\|Z_2\|_\infty :  X+Y=Z_1+Z_2, Z_1\in L_0(\cM,\tau), Z_2\in L_\infty(\cM,\tau)\}\\
&= K_u(X+Y)= K_u(\mu(X+Y)),
\end{align*}
where we used the  fact that $\|X\|_0+ \|Y\|_0  = \tau(\supp(X)) +\tau(\supp(Y)) \ge \tau(\supp(X+Y))=\|X+Y \|_0$ for every $X,Y\in S(\cM,\tau)$.
\end{remark}

\section{An embedding theorem}
It is well-known that for every  symmetrically normed function space $E(0,\infty)$, the corresponding   operator space $E(\cM,\tau)$ is  symmetrically normed \cite{Kalton_S} and is an intermediate space for the noncommutative pair $(L_1(\cM,\tau),\cM)$ \cite{DP2,DPS}.
In this section, we prove an analogue for the $\Delta$-normed case, that is,  every operator space $E(\cM,\tau)$ corresponding to  a  $\Delta$-normed  function space $E(0,\infty)$  is an intermediate space for the noncommutative pair $(L_0(\cM,\tau),\cM)$.

Before we proceed to the proof of the embedding theorem, we show that the topology given by $\|\cdot\|_S$ is equivalent with the measure topology.
\begin{proposition}\label{prop:SandM}
Let $\{X_n\}$ be a sequence in $S(\cM,\tau)$.
Then, $\|X_n\|_S\rightarrow 0$ if and only if $X_n \rightarrow _{t_\tau} 0$.
\end{proposition}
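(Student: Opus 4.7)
The plan is to translate everything into statements about the distribution function of $|X_n|$ and exploit the formula for $\|\cdot\|_S$ already established in Proposition \ref{prop:3.3}. Recall that, by that proposition together with the identity $d_{\mu(X)}(s)=d_{|X|}(s)$ (see the Remark after Proposition \ref{prop:3.3}), we have
\begin{equation*}
\|X\|_S \;=\; \inf_{s>0}\bigl[\,s + d_{|X|}(s)\,\bigr],
\end{equation*}
while $X_n \to 0$ in $t_\tau$ is, by the Preliminaries, equivalent to $d_{|X_n|}(\varepsilon)\to 0$ for every $\varepsilon>0$. So the proposition is really a purely elementary fact about a sequence of non-increasing right-continuous functions $d_n := d_{|X_n|}$ on $(0,\infty)$.

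For the implication $X_n \to_{t_\tau} 0 \Rightarrow \|X_n\|_S \to 0$, I would simply insert $s=\eta/2$ into the infimum for an arbitrary $\eta>0$. This gives $\|X_n\|_S \le \eta/2 + d_{|X_n|}(\eta/2)$, and the assumption guarantees $d_{|X_n|}(\eta/2)<\eta/2$ eventually, so $\|X_n\|_S<\eta$ eventually. That takes one line.

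For the converse $\|X_n\|_S \to 0 \Rightarrow X_n\to_{t_\tau} 0$, fix $\varepsilon>0$ and $\delta>0$ and set $\eta := \min(\varepsilon,\delta)$. Since $\|X_n\|_S<\eta/2$ for $n$ large, the definition of the infimum supplies $s_n>0$ with $s_n+d_{|X_n|}(s_n) < \eta$. In particular $s_n<\eta\le\varepsilon$ and $d_{|X_n|}(s_n)<\eta\le\delta$. Monotonicity of $d_{|X_n|}$ then yields
\begin{equation*}
d_{|X_n|}(\varepsilon) \;\le\; d_{|X_n|}(s_n) \;<\; \delta
\end{equation*}
for all sufficiently large $n$, which is exactly the required convergence $d_{|X_n|}(\varepsilon)\to 0$ for every $\varepsilon>0$.

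I do not anticipate any genuine obstacle: the identification of $\|\cdot\|_S$ with $\inf_{s>0}[s+d_{|X|}(s)]$ has already been done in Proposition \ref{prop:3.3}, and after that observation the equivalence is a short game with the monotone function $d_{|X_n|}$. The only mild care needed is to remember that the infimum defining $\|X_n\|_S$ need not be attained, so one has to pick $s_n$ achieving a value close enough to $\|X_n\|_S$ rather than equal to it; this is handled by the choice of the margin $\eta/2$ above.
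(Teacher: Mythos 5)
Your proof is correct. The implication $X_n\to_{t_\tau}0\Rightarrow\|X_n\|_S\to 0$ is exactly the paper's argument: plug a fixed small $s$ into $\|X_n\|_S\le s+d_{|X_n|}(s)$ (the paper writes it as $\|X_n\|_S\le\varepsilon+\tau(E^{|X_n|}(\varepsilon,\infty))$, using Proposition \ref{prop:3.3}). Where you diverge is the converse: the paper disposes of $\|X_n\|_S\to 0\Rightarrow X_n\to_{t_\tau}0$ by citing Lemma 2.4 of \cite{HLS}, whereas you prove it directly from the formula $\|X\|_S=\inf_{s>0}[s+d_{|X|}(s)]$ by choosing near-minimizers $s_n$ with $s_n+d_{|X_n|}(s_n)<\min(\varepsilon,\delta)$ and using monotonicity of $d_{|X_n|}$. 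Your version is self-contained and purely about non-increasing distribution functions, at the cost of a few extra lines; the paper's is shorter but leans on an external lemma about symmetrically $\Delta$-normed spaces. You correctly noted the only delicate point (the infimum need not be attained) and handled it with the margin $\eta/2$, and there is no circularity since Proposition \ref{prop:3.3} and the distributional description of $t_\tau$-convergence both precede this proposition.
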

\begin{proof}
By \cite[Lemma 2.4]{HLS}, it suffices to show that $\|X_n\|_S\rightarrow 0$ whenever  $X_n \rightarrow _{t_\tau} 0$.
By \cite[Chapter II, Proposition 5.7]{DPS}, we have $\tau(E^{|X_n|} (\varepsilon,\infty))\rightarrow_n 0$ for every $\varepsilon>0$.
By Proposition \ref{prop:3.3}, we have  $\|X_n\|_S \le \varepsilon +\tau(E^{|X_n|} (\varepsilon,\infty))$, which completes the proof.
\end{proof}

Notice that the two-sided ideal $\cF(\tau)$ in $\cM$ coincides with $(L_0\cap L_\infty) (\cM,\tau)$.
For every $X\in \cF(\tau)$,
we define the group-norm $\|X\|_\cF$
by
$$ \|X\|_\cF := \max \{\|X\|_0, \|X\|_\infty\}.$$
The following embedding theorem is the main result of this section, which extends \cite[Theorem 1]{HM} to the non-commutative case.
\begin{theorem}\label{th:embedding}
If $E(0,\infty)$ is a nontrivial symmetrically   $\Delta$-normed  function space, then
$$ \cF (\tau) \subset E(\cM,\tau)\subset S(\cM,\tau).$$
Moreover, the embeddings are continuous.
That is, $E(\cM,\tau)$ is an intermediate space between $L_0(\cM,\tau)$ and $\cM$.
\end{theorem}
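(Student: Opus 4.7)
The plan is to decompose the claim into two continuous embeddings, $\cF(\tau)\hookrightarrow E(\cM,\tau)$ and $E(\cM,\tau)\hookrightarrow S(\cM,\tau)$, and observe that the ``intermediate'' conclusion is then immediate from Proposition~\ref{prop:0infty} (which identifies $L_0(\cM,\tau)+\cM$ with $S(\cM,\tau)$) together with the obvious identity $L_0(\cM,\tau)\cap\cM=\cF(\tau)$. The one structural input I will extract from the nontriviality hypothesis is that every finite-measure characteristic function lies in $E(0,\infty)$: starting from a nonzero $f\in E(0,\infty)$, symmetry lets me assume $f=\mu(f)$, so some $t_0>0$ satisfies $c:=f(t_0)>0$, and then $c\,\chi_{(0,t_0]}\le\mu(f)$ combined with absolute solidity (after possibly iterating axioms (2) and (4) of the $\Delta$-norm finitely many times to scale by $c^{-1}$) yields $\chi_{(0,t_0]}\in E(0,\infty)$. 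Symmetry then propagates this to each shifted block $\chi_{(kt_0,(k+1)t_0]}$, and one more finite iteration of axiom (4) gives $\chi_{(0,a]}\in E(0,\infty)$ for every $a>0$.

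For the embedding $\cF(\tau)\hookrightarrow E(\cM,\tau)$, every $X\in\cF(\tau)$ satisfies the pointwise bound $\mu(X)\le\|X\|_\infty\,\chi_{(0,\tau(s(X))]}$; combined with the previous step and absolute solidity, this places $X$ in $E(\cM,\tau)$. For continuity, if $\|X_n\|_\cF\to 0$ then $\tau(s(X_n))\le 1$ eventually, hence $\mu(X_n)\le\|X_n\|_\infty\,\chi_{(0,1]}$, and axiom (3) of the $\Delta$-norm applied to the fixed element $\chi_{(0,1]}\in E(0,\infty)$ forces $\|X_n\|_E\le \bigl\|\|X_n\|_\infty\,\chi_{(0,1]}\bigr\|_E\to 0$.

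For the embedding $E(\cM,\tau)\hookrightarrow S(\cM,\tau)$, the set inclusion is built into the definition of $E(\cM,\tau)$, so only continuity requires work. By Proposition~\ref{prop:SandM} this reduces to showing that $\|X_n\|_E\to 0$ implies $X_n\to 0$ in measure. Fix $\varepsilon>0$ and set $s_n:=d_{|X_n|}(\varepsilon)$; from the definition of $\mu(X_n)$ one has the pointwise bound $\varepsilon\,\chi_{(0,s_n]}\le\mu(X_n)$, so absolute solidity yields $\|\varepsilon\,\chi_{(0,s_n]}\|_E\le\|X_n\|_E\to 0$. If $s_n\not\to 0$, some subsequence satisfies $s_n\ge\delta>0$, and absolute solidity together with axiom (1) would force the nonzero element $\varepsilon\,\chi_{(0,\delta]}\in E(0,\infty)$ to have vanishing norm, a contradiction.

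The principal technical nuisance throughout is the non-homogeneity of a $\Delta$-norm: scaling by $|\alpha|>1$ is only controlled by iterating axioms (2) and (4), with a constant that may grow super-linearly in $|\alpha|$. Fortunately this is only needed qualitatively, both for the argument that $\chi_{(0,a]}\in E(0,\infty)$ (a finite iteration suffices) and for continuity of $\cF(\tau)\hookrightarrow E(\cM,\tau)$ (which is handled entirely by axiom (3)); no uniform bound in either parameter is required, so the proof goes through without a linear-growth version of the triangle inequality.
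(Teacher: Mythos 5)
Your proof is correct and follows essentially the same route as the paper: extract $\chi_{(0,a]}\in E(0,\infty)$ from nontriviality, bound $\mu(X)\le\|X\|_\infty\chi_{(0,1]}$ and invoke axiom (3) of the $\Delta$-norm for the continuity of $\cF(\tau)\hookrightarrow E(\cM,\tau)$, and reduce the continuity of $E(\cM,\tau)\hookrightarrow S(\cM,\tau)$ to Proposition~\ref{prop:SandM}. The only deviation is that you prove directly that $\|X_n\|_E\to 0$ forces convergence in measure (via the estimate $\varepsilon\,\chi_{(0,d_{|X_n|}(\varepsilon))}\le\mu(X_n)$ and axiom (1)), a step the paper delegates to \cite[Lemma 2.4]{HLS}.
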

\begin{proof}
Since $E(0,\infty)$ is not empty, there is a non-zero element $ x_0 \in E(0,\infty)$.
Then, there is a scalar $t>0$ such that $\mu(t;x_0)>0$.
It is clear that $\mu(t;x_0)\chi_{(0,t]} \le \mu(x_0)$, which implies that $\chi_{(0,t]} \in E(0,\infty)$.
Since $ m(\chi_{(0,t]})=  m(\chi_{(t,2t]})= \cdots = m(\chi_{((n-1)t,nt]})$  and $E(0,\infty)$ is a linear space, it follows that $\chi_{(0,nt]}\in E(0,\infty)$.
Hence, $\cF(0,\infty )\subset E(0,\infty)$.

Let  $\{X_n\}_n \subset \cF(\tau)$ be a sequence  such that $\|X_n\|_\cF \rightarrow 0$.
For every $0<\varepsilon<1$, we can find an $N$ such that for every $n\ge N$, we have $\|X_n \|_{\cF}\le \varepsilon$, that is,
$$\supp (\mu(X_n))\le \varepsilon ~ \mbox{ and } ~ \|X_n\|_\infty \le \varepsilon.$$
Hence, $\mu(X_n) \le \varepsilon \chi_{(0,\varepsilon]}\le \varepsilon \chi_{(0,1]}$ and therefore $\|X_n \|_E  \le \|\varepsilon \chi_{(0,1]}\|_E $.
By the continuity of $\Delta$-norm $\|\cdot\|_E$, we obtain that $\|X_n\|_E\rightarrow_n 0$.

Lemma 2.4 in \cite{HLS} together with   Proposition \ref{prop:SandM} implies that $E(\cM,\tau)$ is continuously embedded into $ S(\cM,\tau)$.
\end{proof}

The set of all self-adjoint elements in $E(\cM,\tau)$ is denoted by $E_h(\cM,\tau)$.
Then,  \cite[Chapter II, Proposition 6.1]{DPS} together with  Proposition \ref{prop:SandM} and Theorem \ref{th:embedding} implies the following results immediately.
\begin{corollary}\label{cor:cone}
Let  $E(0,\infty)$ is a symmetrically    $\Delta$-normed  function space.
The following statements hold.
\begin{enumerate}
                   \item The positive cone $E(\cM,\tau)^+$ is closed in $E(\cM,\tau)$ with respect to $\|\cdot\|_E$.
                   \item
                         If $\{X_n\}_{n=1}^\infty$ is a sequence in $E(\cM,\tau)$ and $X,Y\in E_h(\cM,\tau)$ are such that $\|X_n -X\|_E\rightarrow_n 0$ and $X_n\le Y$ for all $n$, then $X\le Y$.
                   \item If $\{X_n\}_{n=1}^\infty$ is an increasing  sequence in $E_h(\cM,\tau)$ and $X\in E_h(\cM,\tau)$ with $\|X_n-X\|_E \rightarrow_n 0$, then $X_n\uparrow_n X$.
                 \end{enumerate}
\end{corollary}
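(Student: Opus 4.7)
The plan is to transport the three assertions from the ambient space $S(\cM,\tau)$, where they are already known to hold for the measure topology by \cite[Chapter II, Proposition 6.1]{DPS}, into the smaller space $E(\cM,\tau)$ by invoking the continuous embedding asserted in Theorem \ref{th:embedding}. The bridge between the two topologies is Proposition \ref{prop:SandM}: convergence in $\|\cdot\|_S$ is the same as convergence in the measure topology $t_\tau$, and continuity of the embedding $E(\cM,\tau)\hookrightarrow S(\cM,\tau)$ says exactly that $\|X_n-X\|_E\to 0$ forces $\|X_n-X\|_S\to 0$, hence $X_n\to X$ in $t_\tau$.

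With this observation in hand, each item becomes a one-line reduction. For (1), suppose $\{X_n\}\subset E(\cM,\tau)^+$ and $\|X_n-X\|_E\to 0$ for some $X\in E(\cM,\tau)$. Then $X_n\to X$ with respect to $t_\tau$, and since the positive cone of $S(\cM,\tau)$ is closed in the measure topology, we get $X\ge 0$, so $X\in E(\cM,\tau)^+$. For (2), under the stated hypotheses $X_n\to X$ in measure, hence $Y-X_n\to Y-X$ in measure with $Y-X_n\ge 0$, and closedness of the positive cone in $S(\cM,\tau)$ gives $Y-X\ge 0$. For (3), the sequence $\{X_n\}$ is increasing and converges to $X$ in $\|\cdot\|_E$, hence in measure; applying the already-proved part (2) to the tails $\{X_n\}_{n\ge m}$ (each bounded below by $X_m$) we deduce $X\ge X_m$ for every $m$, so $X_n\uparrow X$.

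The argument is essentially a diagram chase, so there is no serious obstacle; the only point requiring a small check is that $X_n\to X$ in $t_\tau$ is genuinely the hypothesis under which \cite[Chapter II, Proposition 6.1]{DPS} applies, and this is guaranteed by combining Theorem \ref{th:embedding} (continuity of $E(\cM,\tau)\hookrightarrow S(\cM,\tau)$ in the $\|\cdot\|_S$-topology) with Proposition \ref{prop:SandM} (equivalence of the $\|\cdot\|_S$-topology and $t_\tau$ on $S(\cM,\tau)$).
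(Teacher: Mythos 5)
Your reduction is exactly the route the paper takes: its entire proof of the corollary is the citation of \cite[Chapter II, Proposition 6.1]{DPS} (the three order properties of $S(\cM,\tau)$ for the measure topology) combined with Proposition \ref{prop:SandM} and Theorem \ref{th:embedding}, so that $\|\cdot\|_E$-convergence forces $t_\tau$-convergence and the statements transfer. Your treatments of items (1) and (2) are complete instances of this transfer.

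Item (3), however, is not finished as written. What you prove is that $X\ge X_m$ for every $m$, i.e.\ that $X$ is an \emph{upper bound} of the increasing sequence; but the conclusion $X_n\uparrow_n X$ asserts that $X$ is the \emph{supremum} of $\{X_n\}$, and the least-upper-bound half is missing from your argument (also, to get $X\ge X_m$ you are in fact using a lower-bound variant of (2), namely closedness of the cone applied to $X_n-X_m\ge 0$ for $n\ge m$; that is fine but should be said). The gap closes with one more application of the same fact: if $Z$ is self-adjoint with $Z\ge X_n$ for all $n$, then $Z-X_n\ge 0$ and $Z-X_n\to Z-X$ in $t_\tau$, so closedness of the positive cone of $S(\cM,\tau)$ in the measure topology gives $Z\ge X$; together with $X\ge X_m$ this yields $X=\sup_n X_n$. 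Alternatively, item (3) follows directly from the statement for increasing sequences contained in \cite[Chapter II, Proposition 6.1]{DPS}, which is how the paper's citation is meant to be used.
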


\section{Interpolation in the pair $(L_0(\cM,\tau),\cM)$}\label{imbedding}

Introduce the dilation operator $\sigma_s$ on $S(0,\infty)$, $s>0$, by setting
\begin{align*}
(\sigma_s(x))(t) = x \left( \frac{t}{s} \right), ~t >0.
\end{align*}
It is well-known that $\mu(X+Y)\leq \sigma_2(\mu(X)+\mu(Y))$, $X,Y\in S(\cM,\tau)$ \cite{LSZ}.
We note also that $\sigma_{2^k}x \in E(0,\infty)$ with
\begin{equation}\label{norm_sigma}
\|\sigma_{2^k}x\|_E\leq (2C_E)^k\|x\|_E
\end{equation}
for all $x\in E(0,\infty)$ and $k\in\mathbb{N}$ (see e.g. \cite{KPS}, see also \cite{HM}).

Recall that $(L_0+L_\infty)(\cM,\tau)=S(\cM,\tau)$ (see Proposition \ref{prop:0infty}).
Let $T: S (\cM,\tau)\rightarrow S(\cM,\tau)$ be a homomorphism, i.e.,
$$  T(X+Y) =TX+TY \mbox{ and }T(-X)=-TX$$ for any $X,Y \in S(\cM,\tau)$.
Let $E(0,\infty)$ be a $\Delta$-normed function space.
A homomorphism $T: E(\cM,\tau) \rightarrow E(\cM,\tau)$ is called \textbf{continuous} if
for any given $\varepsilon>0$, there exists $\delta(\varepsilon)>0$ such that $\|X\|_E <\delta(\varepsilon)$ implies that $\|TX\|_E <\varepsilon$ \cite[Chapter I, Section 4]{KPR}.
A homomorphism is called \textbf{bounded}  if
$$\|T\|_{E\rightarrow E} =\sup _{x\ne 0} \frac{\|TX\|_E}{\|X\|_E}<\infty.$$
The homomorphism $T$ is said to be bounded on the pair $(L_0(\cM,\tau),\cM)$ if $T$ is a bounded mapping from  $L_0(\cM,\tau)$ into $L_0(\cM,\tau)$ and from $\cM$ into $\cM$.
\begin{theorem}\label{th:bounded}
Let $E(0,\infty)$ be a symmetrically    $\Delta$-normed  function space and $T: S (\cM,\tau)\rightarrow S(\cM,\tau)$ be a homomorphism which is bounded on $(L_0(\cM,\tau),\cM)$ with
$$ \| TX\|_0 \le M_0 \|X\|_0, \  \forall X\in L_0(\cM,\tau),$$
$$ \|TX\|_\infty \le M_1 \|X\|_\infty, \  \forall X\in \cM $$
for some constants $M_0,M_1>0$.
Then, $T$ maps $E(\cM,\tau)$ into itself and
 \begin{align}\label{ineq:M1}
\mu(M_0t; TX)  \le \mu(t;M_1X),~ X\in E(\cM,\tau).
 \end{align}
\end{theorem}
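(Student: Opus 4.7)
The plan is to use the $\cK$-functional decomposition from Proposition \ref{prop:3.3} to split $X$ optimally between $L_0(\cM,\tau)$ and $\cM$, push the decomposition through $T$, and then apply the Ky Fan subadditivity of generalized singular values to obtain \eqref{ineq:M1}; membership of $TX$ in $E(\cM,\tau)$ will then follow from the dilation estimate \eqref{norm_sigma}.

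Fix $X\in E(\cM,\tau)\subset S(\cM,\tau)$, $t>0$, and $\varepsilon>0$. Setting $s:=\mu(t;X)+\varepsilon$, I would decompose $X=G+H$ via the spectral projections of $|X|$, with $G:=XE^{|X|}(s,\infty)$ and $H:=XE^{|X|}[0,s]$. From the definition of $\mu$ we have $\tau(E^{|X|}(s,\infty))\le t$, and since the left and right support projections of any $\tau$-measurable operator are Murray--von Neumann equivalent (via polar decomposition), this gives $\|G\|_0=\tau(s(G))\le t$. The estimate $\|H\|_\infty\le s$ is immediate from functional calculus. Using additivity of $T$, we have $TX=TG+TH$ with $\|TG\|_0\le M_0 t$ and $\|TH\|_\infty\le M_1 s$ by the boundedness hypothesis. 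The Ky Fan inequality $\mu(a+b;A+B)\le\mu(a;A)+\mu(b;B)$ at $a=M_0 t$, $b=0$, together with the observations that $\mu(u;TG)=0$ whenever $u\ge\|TG\|_0$ and $\mu(0;TH)=\|TH\|_\infty$, then yields
\begin{align*}
\mu(M_0 t;TX)\le \mu(M_0t;TG)+\mu(0;TH)=\|TH\|_\infty\le M_1(\mu(t;X)+\varepsilon).
\end{align*}
Letting $\varepsilon\downarrow 0$ gives \eqref{ineq:M1}.

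To deduce $T(E(\cM,\tau))\subset E(\cM,\tau)$, one reinterprets \eqref{ineq:M1} pointwise as $\mu(TX)\le M_1\,\sigma_{M_0}\mu(X)$. Choosing $k\in\mathbb{N}$ with $2^k\ge M_0$ and using that $s\mapsto\sigma_s f$ is non-decreasing in $s$ on the cone of decreasing functions, we get $\mu(TX)\le\sigma_{2^k}(M_1\mu(X))$, which lies in $E(0,\infty)$ by \eqref{norm_sigma} and the linearity of $E(0,\infty)$. Solidity then forces $\mu(TX)\in E(0,\infty)$, i.e., $TX\in E(\cM,\tau)$. I expect the only subtle point to be the identification $\|G\|_0\le t$ for the (typically non-self-adjoint) operator $G$, which rests on the coincidence $\tau(s_l(G))=\tau(s_r(G))\le\tau(E^{|X|}(s,\infty))$ coming from the polar decomposition; once that is in hand, the remainder is routine bookkeeping around the Ky Fan inequality and the dilation estimate.
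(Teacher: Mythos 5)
Your proof is correct and follows essentially the same route as the paper: both arguments split $X$ by a projection into an $L_0(\cM,\tau)$ piece and a bounded piece, apply the two boundedness hypotheses of $T$ to each piece to get $\mu(M_0t;TX)\le M_1\mu(t;X)$, and then obtain membership of $TX$ in $E(\cM,\tau)$ from the dilation estimate \eqref{norm_sigma} (with $2^k\ge M_0$) together with solidity. The only difference is bookkeeping: where you take the explicit spectral cut at level $\mu(t;X)+\varepsilon$ and invoke the subadditivity $\mu(a+b;A+B)\le\mu(a;A)+\mu(b;B)$, the paper works with an arbitrary projection $P$ with $\tau(\mathbf{1}-P)<t$ and uses the distance formula $\mu(s;Y)=\inf\{\|Y-B\|_\infty:\ \|B\|_0\le s\}$ together with Definition \ref{mu}, which amounts to the same estimate.
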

\begin{proof}
For any $P\in \cP(\cM)$ with $\tau(\mathbf{1}-P)<t$, we have $ \|T X({\bf 1}-P)\|_0 \le M_0\|X( \mathbf{1}-P)\|_0 \le M_0 t$.
By \cite[Theorem 2.3.13]{LSZ}, for every $t>0$, we have
\begin{align*}
\mu(M_0t;TX)
&=
\inf\{ \|TX-B\|_\infty : B\in S(\cM,\tau), \ \|B\|_0 \le M_0t\}\\
&\le  \|TX - TX({\bf 1}-P)\|_\infty =  \| TX P\|_\infty \le M_1 \|XP\|_\infty.
\end{align*}
By Definition \ref{mu}, we have
\begin{align*}
\mu(M_0t; TX)  \le M_1 \mu(t; X).
\end{align*}
This implies that  $\sigma_{1/M_0}\mu( TX) \in E(0,\infty)$ and therefore, by appealing to \eqref{norm_sigma}, we conclude that   $\mu( TX) =\sigma_{M_0}(\sigma_{1/M_0}\mu( TX) )\in E(0,\infty)$.
\end{proof}

If $X,Y\in S(\cM,\tau)$, then $X$ is said to be submajorized by $Y$, denoted by $X\prec\prec Y$, if \begin{align*}
\int_{0}^{t} \mu(s;X) ds \le \int_{0}^{t} \mu(s;Y) ds , ~t\ge 0.
\end{align*}
A linear subspace $E$ of $S(\cM,\tau)$ equipped with a complete norm $\|\cdot\|_E$, is called \emph{fully symmetric space} (of $\tau$-measurable operators) if $X\in S(\cM,\tau)$, $Y \in E$ and $X\prec\prec Y$ imply that $X\in E$ and $\|X\|_E \le \|Y\|_E$ \cite{DP2,DPS,LSZ}.

For a symmetric normed function space $E(0,\infty)$,
by \cite[Theorem 10.13]{DPS} (see also \cite{DDP} and \cite{KPS}), $E(\cM,\tau)$ is an interpolation space between  $L_1(\cM,\tau)$ and $\cM$ if  $ E(0,\infty)$ is fully symmetric.
In particular, one can find symmetric normed spaces $E(\cM,\tau)$ which are not interpolation spaces between $L_1(\cM,\tau)$ and $L_\infty(\cM,\tau)$ \cite[Chapter II, $\S$  5.7]{KPS}.
However,
for an  arbitrary symmetrically    $\Delta$-normed  function space $E(0,\infty)$, $E(\cM,\tau)$ is, in fact, an interpolation space between  $L_0(\cM,\tau)$ and $\cM$.

\begin{corollary}\label{cor:inter}
Let $E(0,\infty)$ be a symmetrically    $\Delta$-normed  function space and $T: S (\cM,\tau)\rightarrow S(\cM,\tau)$ be a homomorphism which is bounded on $(L_0(\cM,\tau),\cM)$ with
$$ \| TX\|_0 \le M_0 \|X\|_0, \  \forall X\in L_0(\cM,\tau),$$
$$ \|TX\|_\infty \le M_1 \|X\|_\infty, \  \forall X\in \cM$$
for some constants $M_0,M_1>0$.
Then, $T$ is a bounded homomorphism from $E(\cM,\tau)$ into itself.
In particular, $\sup_{X\in S(\cM,\tau)}\frac{\|TX\|_{S}}{\|X\|_{S}} <\infty$.
\end{corollary}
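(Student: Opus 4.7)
The plan is to convert the pointwise singular-value estimate \eqref{ineq:M1} of Theorem \ref{th:bounded} into a genuine $\Delta$-norm bound $\|TX\|_{E(\cM,\tau)} \le C\,\|X\|_{E(\cM,\tau)}$, using only the dilation estimate \eqref{norm_sigma} together with the absolute solidity and the quasi-homogeneity of the $\Delta$-norm.

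First, since $\mu(t;M_1 X) = M_1\mu(t;X)$, the inequality $\mu(M_0 t;TX) \le \mu(t;M_1 X)$ from Theorem \ref{th:bounded}, after the substitution $s = M_0 t$, becomes
\begin{equation*}
\mu(s;TX) \le M_1\,\mu(s/M_0;X) = M_1\,(\sigma_{M_0}\mu(X))(s),\quad s>0,
\end{equation*}
so pointwise on $(0,\infty)$ one has $\mu(TX) \le M_1\,\sigma_{M_0}\mu(X)$. Choose an integer $k\ge 0$ with $M_0 \le 2^k$. Because $\mu(X)$ is non-increasing, the map $s\mapsto(\sigma_s\mu(X))(t)$ is non-decreasing for every fixed $t>0$, and hence $\sigma_{M_0}\mu(X) \le \sigma_{2^k}\mu(X)$. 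Combining absolute solidity of $\|\cdot\|_E$, the dilation estimate \eqref{norm_sigma}, and the remark following the definition of $\Delta$-norm (for each $\alpha\in\mathbb{C}$ there is a constant $M(\alpha)$ with $\|\alpha x\|_E \le M(\alpha)\|x\|_E$) yields
\begin{equation*}
\|TX\|_{E(\cM,\tau)} = \|\mu(TX)\|_E \le \|M_1\,\sigma_{2^k}\mu(X)\|_E \le M(M_1)\,(2C_E)^k\,\|X\|_{E(\cM,\tau)},
\end{equation*}
uniformly in $X\in E(\cM,\tau)$. This is exactly the asserted boundedness of $T$ as a homomorphism on $E(\cM,\tau)$.

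For the ``in particular'' clause, Proposition \ref{prop:3.3} identifies $(S(\cM,\tau),\|\cdot\|_S)$ with the symmetrically $\Delta$-normed operator space attached, via the Calkin correspondence, to the symmetrically $\Delta$-normed function space $(S(0,\infty),K_1)$: indeed $K_1$ depends only on $\mu(f)$, is absolutely solid through the formula $K_1(f)=\inf_{s>0}[s+d_{\mu(f)}(s)]$, and is complete. Specialising the argument above to $E(0,\infty)=(S(0,\infty),K_1)$ therefore gives $\sup_{X\ne 0}\|TX\|_S/\|X\|_S<\infty$.

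I do not anticipate a serious obstacle: the corollary is a direct consequence of Theorem \ref{th:bounded} coupled with the dilation inequality. The only mild subtlety is the absence of absolute homogeneity of the $\Delta$-norm, which forces the additional constant $M(M_1)$ in front of the scalar factor $M_1$; this is precisely what is handled by the quasi-homogeneity remark recorded in Section \ref{prel}.
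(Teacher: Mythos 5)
Your proof is correct and follows essentially the same route as the paper: apply the singular-value estimate of Theorem \ref{th:bounded}, compare $\sigma_{M_0}$ with $\sigma_{2^k}$ and invoke the dilation bound \eqref{norm_sigma}, then absorb the scalar $M_1$ via the quasi-homogeneity of the $\Delta$-norm (the paper does this explicitly by replacing $M_1$ with $[M_1]+1$ and iterating the quasi-triangle inequality, which is exactly the content of the remark you cite). Your treatment of the ``in particular'' clause, specialising to $E(0,\infty)=(S(0,\infty),K_1)$ via Proposition \ref{prop:3.3}, is likewise the intended argument.
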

\begin{proof}
By Theorem \ref{th:bounded}, for any $X\in E(\cM,\tau)$, we have $TX\in E(\cM,\tau)$ with
\begin{align}\label{ineq:M}
\mu(M_0t; TX)  \le M_1 \mu(t; X).
\end{align}
Let $k\ge 0$ be an  integer such that $2^k \ge  M_0$.
Noticing that $\sigma_{2^k}(\mu(X) ) \ge \sigma_{M_0} (\mu(X))$.
By (\ref{norm_sigma}), we have
\begin{equation}\label{ineq:CE}
\|\sigma_{M_0} (\mu(X))\|_E \le \|\sigma_{2^k}\mu(X)\|_E\le (2C_E)^k\|\mu(X)\|_E
\end{equation}
for any $X\in E(\cM,\tau)$.

Then,  we get
 \begin{align*}
 \|TX\|_E  &= \|\mu(TX)\|_E  \stackrel{\eqref{ineq:CE}}{\le} (2C_E)^k \|\sigma_{1/M_0}(\mu(TX))\|_E \stackrel{\eqref{ineq:M}}{\le} (2C_E)^k\|M_1 \mu(X)\|_E\\
 &\le (2C_E)^k \|([M_1]+1)  \mu(X)\|_E \le (2C_E)^k  \sum_{i=1}^{[M_1]+1}C_E^i \|  \mu(X)\|_E,
 \end{align*}
where  $[M_1]$ is the  integer part of $ M_1$.
The proof is complete.
\end{proof}

The results in this section are applied to the  study of   orbits and $\cK$-orbits in the pair of $(L_0(\cM,\tau),\cM)$ in the next section.

\section{Orbits and $\cK$-orbits}\label{O}
For an element $X\in S (\cM,\tau)$,  the orbit $\orb(X;L_0(\cM,\tau),\cM)$ of $X$ is  the set of all $Y\in S (\cM,\tau)$ such that $Y = TX$ for some  homomorphism $T$ which is \emph{bounded} on the pair $(L_0 (\cM,\tau), \cM)$.
Furthermore, we define
$$\|Y\|_{\orb} :=\inf_{Y=TX} \|T\|_{S(\cM,\tau)},$$
where the infimum is taken over all bounded homomorphisms $T$ such that $TX=Y$ and $\|T\|_{S(\cM,\tau)} =\max \{ \|T\|_{L_0\rightarrow L_0}, \|T \|_{L_\infty \rightarrow L_\infty}\}$.

By Theorem \ref{th:bounded}, we have the following proposition, which is an analogue of \cite[Theorem 1]{Astashkin}.
\begin{proposition}\label{prop:6.1}
Let  $X\in S(\cM,\tau)$.
Then,  for every  $Y\in \orb(X; L_0(\cM,\tau),\cM )$,  we have
\begin{align}\label{ineq:C}
\mu(  t; Y)  \le \|Y\|_\orb \mu(\frac{t}{\|Y\|_\orb}; X),~ t>0.
\end{align}
\end{proposition}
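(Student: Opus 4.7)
The plan is to apply Theorem \ref{th:bounded} to homomorphisms that nearly realise the infimum $\|Y\|_\orb$, and then to pass to the limit using the right-continuity of generalised singular value functions. I may assume $c := \|Y\|_\orb > 0$; otherwise $Y = 0$ and the asserted inequality is trivial.

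By the definition of $\|Y\|_\orb$, I would choose a sequence of bounded homomorphisms $T_n : S(\cM,\tau) \to S(\cM,\tau)$ with $T_n X = Y$ and $\|T_n\|_{S(\cM,\tau)} = M_n \downarrow c$. Writing $M_{0,n} := \|T_n\|_{L_0 \to L_0}$ and $M_{1,n} := \|T_n\|_{L_\infty \to L_\infty}$, so that $\max(M_{0,n}, M_{1,n}) = M_n$, Theorem \ref{th:bounded} applied to each $T_n$ yields
\[
\mu(M_{0,n} s; Y) \le M_{1,n} \mu(s; X), \quad s > 0.
\]
Since $M_{0,n} \le M_n$ and $\mu(\cdot; Y)$ is non-increasing, while $M_{1,n} \le M_n$, this upgrades to the homogeneous estimate $\mu(M_n s; Y) \le M_n \mu(s; X)$ valid for all $s > 0$.

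Next, I would fix $s > 0$ and let $n \to \infty$. Because $M_n s \downarrow cs$ and $\mu(\cdot; Y)$ is right-continuous on $(0,\infty)$ (a standard property of generalised singular value functions), $\mu(M_n s; Y) \to \mu(cs; Y)$; on the other hand $M_n \mu(s; X) \to c \mu(s; X)$. Passing to the limit gives $\mu(cs; Y) \le c \mu(s; X)$ for every $s > 0$; setting $t := cs$, this is precisely $\mu(t; Y) \le c\, \mu(t/c; X)$, as required.

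The main obstacle is conceptual rather than computational: the infimum defining $\|Y\|_\orb$ ranges over the specific values $\|T\|_{S(\cM,\tau)}$ attainable by homomorphisms sending $X$ to $Y$, so one cannot simply substitute $M = c$ into the estimate produced by Theorem \ref{th:bounded}. Taking the limit on the argument side of $\mu(\cdot; Y)$ resolves this thanks to right-continuity; performing the limit on the $X$-side instead would only yield $\mu((t/c)^-; X)$ on the right, generally strictly larger than $\mu(t/c; X)$ at jump points of $\mu(\cdot; X)$.
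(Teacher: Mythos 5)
Your proof is correct and follows essentially the same route as the paper: apply Theorem \ref{th:bounded} to homomorphisms nearly attaining $\|Y\|_\orb$, upgrade the estimate via monotonicity of $\mu$ since $M_{0},M_{1}\le\|T\|_{S(\cM,\tau)}$, and pass to the limit using right-continuity of $\mu(\cdot;Y)$. The only cosmetic difference is that you use a sequence $M_n\downarrow\|Y\|_\orb$ where the paper uses an $\varepsilon$-argument.
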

\begin{proof}
Notice for every $\varepsilon>0$, we can find a $T$ such that $Y=TX$ with $\|T\|_{S(\cM,\tau)} \le \|Y\|_{\orb} +\varepsilon $.
Then, by (\ref{ineq:M1}),  we have
\begin{align*}
\mu(\|T\|_{S(\cM,\tau)} t; Y)=\mu(\|T\|_{S(\cM,\tau)} t; TX) \le \|T\|_{S(\cM,\tau)} \mu(t; X) .
\end{align*}
Hence, $\mu( (\|Y\|_{\orb} +\varepsilon) t; Y)  \le (\|Y\|_{\orb} +\varepsilon)\mu(t; X)  $ for every $t>0$.
By the right-continuity of singular value functions, we have $$\mu(\|Y\|_{\orb} t; Y)  \le \|Y\|_{\orb} \mu(t; X)  $$
for every $t>0$, which completes the proof.
\end{proof}

Let $(X_0,X_1)$ be a pair of symmetrically    $\Delta$-normed spaces.
The $\cK $-orbit of $A\in X_0+X_1$ is defined by the set $\cK\cO(A;X_0,X_1)$ of all $X \in X_0+X_1$
such that
$$\|X\|_{\cK\cO} :=\sup_{t>0} \frac{\cK(t,X; X_0,X_1)}{\cK(t, A; X_0,X_1)}<\infty,$$
where $\cK(t,Z; X_0,X_1) := \inf\{\|Z_0\|_{X_0}+t\|Z_1\|_{X_1}: Z= Z_0 +Z_1 , Z_0\in X_0, Z_1\in X_1\}$.

A pair $(X_0,X_1)$ is called \emph{$\cK$-monotone} if $\cK\cO(A;X_0,X_1) = \orb(A;X_0,X_1)$ for all $A\in X_0+X_1$. The pair $(L_1(0,\infty), L_\infty(0,\infty))$ is a classical example of a $\cK$-monotone pair (see e.g. \cite{Calderon}).
Moreover, the noncommutative pair $(L_1(\cM,\tau),L_\infty(\cM,\tau))$ is $\cK$-monotone (see \cite[Proposition 2.5, Theorem 4.7]{DDP}).
%{\color{red}We present a brief proof for completeness. \begin{proposition} The pair $(L_1(\cM,\tau),L_\infty(\cM,\tau))$ is $\cK$-monotone.end{proposition}
%\begin{proof}By \cite[Proposition 2.5]{DDP} (see also \cite[Chapter III, Theorem 9.16]{DPS}), we have
%$$K(t, A; L_1(\cM,\tau),L_\infty(\cM,\tau))= \int_0^t \mu(s;A)ds.$$
%Then, $\cK\cO(A; L_1(\cM,\tau), L_\infty(\cM,\tau))$ is the set of all $X\in ( L_1+L_\infty) (\cM,\tau)$ such that $$\sup_{t>0}\frac{\int_0^t\mu(s;X)ds}{\int_0^t\mu(s;A)ds}<\infty,$$
%i.e., $C X\prec\prec A$ for some $C>0$.

%By \cite[Theorem 4.7]{DDP}, for any $A\in( L_1+L_\infty) (\cM,\tau)$, we have $\orb (A; L_1(\cM,\tau), L_\infty(\cM,\tau)) = \mathbb{C}\cdot \Omega (X) : =  \{\mathbb{C} \cdot Y\in S(\cM,\tau): Y \prec\prec A\}$.
%Hence, $\cK\cO(A; L_1(\cM,\tau), L_\infty(\cM,\tau)) = \orb (A; L_1(\cM,\tau), L_\infty(\cM,\tau))$.
%\end{proof}}

It follows from the definition  that the  unit ball of $\orb(A;X_0,X_1)$ is a subset of the unit \mbox{ball} of $\cKO(A;X_0,X_1)$.
Moreover,
\cite[Proposition 2.5 and Theorem 4.7]{DDP} imply that the the closed unit ball of $\orb(A;L_1(\cM,\tau),\cM)$ coincides with the unit ball of $\cKO(A;L_1(\cM,\tau),\cM)$.
However, it is known that the reverse inclusion may fail for certain element in  the pair $L_0(0,\infty)+ L_\infty(0,\infty)$ \cite{Astashkin}.
One of the main results of this section is a non-commutative version of \cite[Theorem 4]{Astashkin}.

By Proposition \ref{prop:3.3},
 the  $\cK $-orbit $\cK\cO(A;L_0(\cM,\tau),\cM)$ of  every $\cA\in S(\cM,\tau)$ is the set of all $X \in S(\cM,\tau)$
such that
$$\|X\|_{\cK\cO} :=\sup_{t>0} \frac{K_t(\mu(X))}{K_t(\mu(A))}<\infty.$$
\begin{theorem}If $\cM$ is a non-trivial von Neumann algebra ($\cM\ne \mathbb{C}{\bf 1}$ and  $\cM\ne 0$), then there exist $A,X\in S(\cM,\tau)$ such that
$$K_t(A) = K_t(X)$$
whereas $\mu(t;A)<\mu(t;X)$, $t\in E$, for some measurable set $E$, $m(E)>0$.
In particular, the unit ball of the $\cKO(A;L_0(\cM,\tau),\cM)$ does not coincide with the unit ball of $\orb(A;L_0(\cM,\tau),\cM)$.
\end{theorem}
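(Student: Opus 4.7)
The plan is to exhibit a concrete pair $(A,X)$ of operators in $S(\cM,\tau)$ whose $\cK$-functionals coincide as functions of $t$, but whose singular value functions differ strictly on a set of positive measure; the assertion about unit balls will then follow from Proposition \ref{prop:6.1}. The hypothesis $\cM\neq\mathbb{C}\mathbf{1}$ supplies a non-trivial projection $E\in\cM$, and semifiniteness of $\tau$ yields non-zero sub-projections of $E$ and of $\mathbf{1}-E$ of finite trace; after a possible relabelling I may select two orthogonal projections $P_1,P_2\in\cM$ with $a_1:=\tau(P_1)\geq \tau(P_2)=:a_2>0$.

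Next I would set $A:=2P_1+P_2$ and $X:=2(P_1+P_2)$. Orthogonality of $P_1,P_2$ gives the step functions
\[
\mu(s;A)=2\chi_{(0,a_1]}(s)+\chi_{(a_1,a_1+a_2]}(s),\qquad \mu(s;X)=2\chi_{(0,a_1+a_2]}(s),
\]
so $\mu(s;A)=1<2=\mu(s;X)$ on the set $E:=(a_1,a_1+a_2]$, which has positive measure $a_2$.

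The core step is computing both $\cK$-functionals. Applying Proposition \ref{prop:3.3} in the form $K_u(Y)=\inf_{t>0}[t+u\mu(t;Y)]$ and using the piecewise-constant structure of the two singular value functions yields
\[
K_u(X)=\min(2u,\,a_1+a_2),\qquad K_u(A)=\min(2u,\,a_1+u,\,a_1+a_2).
\]
The ordering $a_1\geq a_2$ is precisely what makes the extra term $a_1+u$ in $K_u(A)$ redundant: when $u\leq a_1$ one has $a_1+u\geq 2u$, and when $u>a_1\geq a_2$ one has $a_1+u>a_1+a_2$. Hence $K_u(A)=K_u(X)$ for every $u>0$.

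For the last assertion, this equality gives $\|X\|_{\cKO}=\sup_{t>0}K_t(X)/K_t(A)=1$, so $X$ lies in the closed unit ball of $\cKO(A;L_0(\cM,\tau),\cM)$. If $X$ also belonged to the closed unit ball of $\orb(A;L_0(\cM,\tau),\cM)$, then $\|X\|_{\orb}\leq 1$ and Proposition \ref{prop:6.1}, together with the monotonicity of $\mu(\cdot;A)$, would force $\mu(t;X)\leq\|X\|_{\orb}\,\mu(t/\|X\|_{\orb};A)\leq\mu(t;A)$ for all $t>0$, contradicting the strict inequality on $E$. The only delicate point I anticipate is the bookkeeping in the first step that arranges $a_1\geq a_2$; without that ordering the middle term in $K_u(A)$ becomes active on some interval of $u$-values and the $\cK$-functional equality breaks down, but since one is free to relabel the projections, this is not a genuine obstruction.
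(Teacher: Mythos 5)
Your proposal is correct and takes essentially the same approach as the paper: two orthogonal finite-trace projections, operators $A$ and $X$ built as step combinations whose $\cK$-functionals coincide (computed via Proposition \ref{prop:3.3}) while $\mu(A)<\mu(X)$ on a set of positive measure, followed by Proposition \ref{prop:6.1} to exclude $X$ from the unit ball of $\orb(A;L_0(\cM,\tau),\cM)$. The only cosmetic difference is that you fix the coefficients $2$ and $1$ and relabel so that $\tau(P_1)\ge\tau(P_2)$, whereas the paper keeps the traces arbitrary and instead constrains the coefficients by $k_1>k_2>\tau_2k_1/(\tau_1+\tau_2)$.
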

\begin{proof}
Since $\cM \ne \mathbb{C}\bf 1$, there exist two $\tau$-finite projections $P_1,P_2\in \cP(\cM)$ such that $P_1\perp P_2 $.
Let $\tau_1:= \tau(P_1)>0$ and $\tau_2:=\tau(P_2)>0$.

Let $ k_1, k_2>0$ be such that
\begin{align}\label{ineq:tau_k}
k_1>k_2 >\frac{\tau_2 k_1}{\tau_1+\tau_2}.
\end{align}

Define $$X:= k_1( P_1 +P_2).$$
Then, $\mu(X) = k_1 \chi_{(0, \tau_1+\tau_2)}$.
By (\ref{ineq:K}), we have
$$ K_t(X) = \min\{tk_1,  \tau_1+\tau_2\}.$$
Then, we have
$$ K_t(X)=\left\{
\begin{aligned}
t k_1, & \qquad\qquad  t<  \frac{\tau_1+\tau_2}{k_1 }, \\
\tau_1+\tau_2   , &  \qquad\qquad t \ge \frac{\tau_1+\tau_2}{k_1 }.
\end{aligned}
\right.
$$
Define $$A:= k_1 P_1 +  k_2 P_2.$$
Then, $\mu(A) = k_1 \chi_{(0,\tau_1)} + k_2 \chi_{[\tau_1,\tau_1+\tau_2)}$.
By (\ref{ineq:K}), we have
$$ K_t(A) = \min\{tk_1,  \tau_1 + t k_2,  \tau_1+\tau_2\}.$$
However, (\ref{ineq:tau_k}) implies that there is no such a $t$ such that $ \tau_1 + t k_2 \le \min\{tk_1,  \tau_1+\tau_2\}$.
Hence, we have
$$ K_t(A)=\left\{
\begin{aligned}
t k_1, & \qquad\qquad  t<  \frac{\tau_1+\tau_2}{k_1 } ,\\
\tau_1+\tau_2   , &  \qquad\qquad t \ge \frac{\tau_1+\tau_2}{k_1 }.
\end{aligned}
\right.
$$
That is, $K_t(A)=K_t(X)$. However, it is clear that
\begin{align}\label{9}
\mu(t;A) <  \mu(t;X), ~\quad  \tau_1\le t <\tau_1+\tau_2.
\end{align}

Assume that $X$ lies in the unit ball of $ \orb(A;L_0(\cM,\tau),\cM)$.
By (\ref{ineq:C})
\begin{align*}
\mu(  t; X)  \le \|X\|_\orb \mu(\frac{t}{\|X\|_\orb}; A) \le \mu(t ; A), ~t>0,
\end{align*}
which is a contradiction with \eqref{9}.
Hence,
$X$ lies in the unit ball of $\cKO(A;L_0(\cM,\tau),\cM)$ but not in the unit ball of $\orb(A;L_0(\cM,\tau),\cM)$.
\end{proof}

In \cite{Astashkin}, it is  asserted incorrectly that the $\orb (A; L_0(0,\infty), L_\infty(0,\infty)) \ne \cKO (A;L_0(0,\infty), L_\infty(0,\infty))$.
The following proposition together \cite[Theorem 1]{Astashkin} explains why  $\orb (A; L_0(0,\infty), L_\infty(0,\infty)) = \cKO (A;L_0(0,\infty), L_\infty(0,\infty))$, i.e.,  the pair $(L_0(0,\infty),L_\infty(0,\infty))$ is $\cK$-monotone.
We would like to thank Professor Astashkin for providing the proof for the special case when $\cM=L_\infty(0,\infty)$.
\begin{proposition}\label{prop:6.5}
Let $A,X\in S(\cM,\tau)$.
Then, the following statements are equivalent.
\begin{enumerate}
  \item There exists $C >0$ such that $\mu(s;X)\le C \mu(\frac{s}{C};A)$ for every $s>0$.
  \item $\sup_{t>0} \frac{K_t(X)}{K_t(A)}<\infty$.
\end{enumerate}
\end{proposition}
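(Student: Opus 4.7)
The plan is to exploit the representation $K_t(Z) = \inf_{r>0}[r + t\mu(r;Z)]$ from Proposition \ref{prop:3.3} in both directions; this reduces the whole proposition to elementary manipulations with the infimum.

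For $(1)\Rightarrow(2)$, I would simply substitute the pointwise inequality $\mu(r;X) \le C\mu(r/C;A)$ into the infimum for $K_t(X)$ and perform the change of variables $r' = r/C$:
\begin{equation*}
K_t(X) = \inf_{r>0}[r + t\mu(r;X)] \le \inf_{r>0}[r + tC\mu(r/C;A)] = C\inf_{r'>0}[r' + t\mu(r';A)] = C\, K_t(A).
\end{equation*}
Taking the supremum over $t$ yields (2) with bound $C$.

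The substantive direction is $(2)\Rightarrow(1)$. Set $C' := \sup_{t>0} K_t(X)/K_t(A) < \infty$; if $A=0$ then $K_t(A)>0$ fails and the hypothesis forces $X=0$, so both assertions are trivial, and we may assume $A\neq 0$ and $K_t(A)>0$ for every $t>0$. Fix $s>0$ with $\mu(s;X)>0$ (the case $\mu(s;X)=0$ makes (1) trivial at that point) and make the key choice $t := s/\mu(s;X)$, which equalizes the two candidate terms $s$ and $t\mu(s;X)$. The first step is the lower bound $K_t(X)\ge s$: indeed, if $r\ge s$ then $r + t\mu(r;X) \ge s$, while if $r<s$ then monotonicity of $\mu(\cdot;X)$ gives $r + t\mu(r;X) \ge t\mu(s;X) = s$, so the infimum over $r$ is at least $s$. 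Combining with the hypothesis yields $K_t(A) \ge s/C'$, which, reinserted into $r + t\mu(r;A) \ge K_t(A)$, gives
\begin{equation*}
\mu(r;A) \ge \frac{s/C' - r}{t} \quad \text{for every } r>0.
\end{equation*}
Evaluating at the specific point $r = s/(2C')$ produces $\mu(s/(2C');A) \ge \mu(s;X)/(2C')$, so (1) holds with constant $C = 2C'$.

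The main obstacle is the correct choice of scale in $(2)\Rightarrow(1)$: the rest is bookkeeping, but the balancing $t = s/\mu(s;X)$ is exactly what is needed to turn the two-sided estimate on the infimum into a pointwise estimate on $\mu$. Once this scale is fixed, the remaining step is a single evaluation of the constraint $r + t\mu(r;A) \ge s/C'$ at a convenient $r$; I chose $r = s/(2C')$ to split the gap $s/C'$ evenly, but any choice in $(0, s/C')$ would produce an analogous estimate with a slightly different constant.
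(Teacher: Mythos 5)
Your proof is correct, and the nontrivial direction is executed along a genuinely more direct route than the paper's. The implication $(1)\Rightarrow(2)$ is identical to the paper's: substitute the pointwise bound into $K_t(X)=\inf_{r>0}[r+t\mu(r;X)]$ and rescale. For $(2)\Rightarrow(1)$, both arguments hinge on the balancing choice $t=s/\mu(s;X)$, but you then stay entirely with the $K$-functional: you prove $K_t(X)\ge s$ by splitting $r\ge s$ and $r<s$ and using monotonicity of $\mu(\cdot;X)$, deduce $K_t(A)\ge s/C'$, and extract a pointwise lower bound on $\mu(\cdot;A)$ from the constraint $r+t\mu(r;A)\ge K_t(A)$ evaluated at $r=s/(2C')$. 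The paper instead introduces the auxiliary functional $M_t(Z)=\inf_s\max\{s,t\mu(s;Z)\}$, invokes the equivalence $M_t\le K_t\le 2M_t$ from the Maligranda--Persson reference, identifies $M_t(X)=s$, and must argue with the left limit $\mu(s_1^-;A)$ at the near-minimizing point $s_1=M_t(A)$, finally relaxing the constant from $2C$ to $3C$ to replace that left limit by an actual value of $\mu(\cdot;A)$. Your version avoids the auxiliary functional, the external citation, and the left-limit bookkeeping, and gives the marginally better constant $2C'$ in place of $3C$; what the paper's route buys is the explicit contact with the $E$-functional $M_t$, which it uses as a packaged tool. Your handling of the degenerate cases ($A=0$, or $\mu(s;X)=0$ at the given point) is adequate, and the choice $t=s/\mu(s;X)$ is legitimate because $\mu(s;X)<\infty$ for every $s>0$ when $X\in S(\cM,\tau)$.
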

\begin{proof}
(i) For every $A,X$ satisfying condition (1), we have
\begin{align*}
\sup_{t>0}  \frac{K_t(X)}{K_t(A)} &= \sup_{t>0}  \frac{\inf_s\{s +  t \mu(s;X) \}}{\inf_s\{s +  t \mu(s;A) \}}\le \sup_{t>0}  \frac{\inf_s\{s +  t C \mu(s/C ;A) \}}{\inf_s\{s +  t \mu(s;A) \}}\\
&= \sup_{t>0}  \frac{\inf_s\{C s +  t C \mu(s  ;A) \}}{\inf_s\{s +  t \mu(s;A) \}}= C <\infty,
\end{align*}
which proves the validity of condition (2).

(ii) Conversely, assume that
$$\sup_{t>0}  \frac{\inf_s\{s +  t \mu(s;X) \}}{\inf_s\{s +  t \mu(s;A) \}} \le C$$
for some $C>0$.
For every $Z\in S(\cM,\tau)$, we define
$$
M_t(Z):= \inf_s\{\max\{s,t \mu(s;Z)\}\}, ~  t>0 .$$
Clearly, we have
$$ M_t(Z)\le K_t(Z)\le 2M_t(Z),\;\;Z\in S(\cM,\tau), ~t>0 \mbox{\cite{MP1991}}.$$
Therefore,
\begin{eqnarray}\label{eq1}
M_t(X)\le 2CM_t(A),\;\;t>0.
\end{eqnarray}

Let $s\in(0,\infty)$ and
let $t:=\frac{s}{\mu(s;X)}$ (without loss of generality, we may assume that  $\mu(s;X)>0$).
Notice that  $s=t\mu(s;X)\le t\mu(s-\Delta_1 ;X)$ and $s=t\mu(s;X)\le s+\Delta_2 $ for any $\Delta_1 ,\Delta_2 >0$ with  $s-\Delta_1 > 0$.
We have,
\begin{align}\label{s=}
M_t(X)=s=t\mu(s;X).
\end{align}
Let $s_1:=M_t(A)$.
Then, we have
%$$t \mu(s_1;A) \le s_1$$
%(otherwise, assume that  $ t\mu(u;A) \ge  s_1+\delta =M_t(A)+\delta$ for some $\delta>0$ for every $u\le s_1 $. Then, by the right-continuity of $\mu(A)$,  there exists $\varepsilon>0$ such that $t \mu(s_1 +\varepsilon;A)\ge s_1+\frac{\delta}{2}$.
%Hence, $\max\{ u , t\mu(u;A)  \} \ge \min( s_1 +\varepsilon, s_1 +\frac{\delta}{2} )>s_1 =M_t(A)$ for all $u>0$, which is a contradiction to the definition of $M_t (A)$.)
%and
\begin{align}\label{s-}
t \mu(s_1^-;A )(:=t\lim_{k \uparrow s_1} \mu(k;A))\ge s_1
\end{align}
(otherwise, we have $\max\{t\mu(s_1-\varepsilon;A ),s_1-\varepsilon \}<s_1 = M_t(A)$ for some $\varepsilon>0$, which is a contradiction to the definition of $M_t (A)$.).

Since $s  \stackrel{\eqref{s=}}{=} M_t(X) \stackrel{\eqref{eq1}}{\le} 2C M_t(A) = 2Cs_1 $, it follows that
 \begin{align}\label{esi1}
 \mu(s_1^-;A)\le \mu((\frac{s}{2C})^-;A).
 \end{align}
Then, we obtain
$$
t\mu(s;X) \stackrel{\eqref{s=}}{=} M_t(X) \stackrel{\eqref{eq1}}{\le} 2CM_t(A) = 2Cs_1  \stackrel{\eqref{s-}}{\le} 2C t \mu(s_1^-;A)\stackrel{\eqref{esi1}}{\le} 2Ct \mu((\frac{s}{2C})^-;A),$$
that is,
\begin{align*}
\mu(s;X)\le 2C \mu((\frac{s}{2C})^-;A) .
\end{align*}
Since $s>0$ is arbitrary taken,  it follows that
$$\mu(s;X)\le 3C \mu(\frac{s}{3C};A) $$
for every $s>0$, which completes the proof.
\end{proof}

By the above proposition  and  \cite[Theorem 1]{Astashkin},  we obtain the  following result immediately, which implies that the commutative pair $(L_0(0,\infty), L_\infty(0,\infty))$ is indeed $\cK$-monotone.
\begin{corollary}
For every $a\in S(0,\infty)$, we have
$$\orb(a;L_0(0,\infty),L_\infty(0,\infty)) = \cKO(a;L_0(0,\infty),L_\infty(0,\infty)).$$
\end{corollary}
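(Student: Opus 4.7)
The plan is to prove the equality by double inclusion, leveraging Proposition \ref{prop:6.5} as the bridge between singular-value domination and $\cK$-domination, and invoking \cite[Theorem 1]{Astashkin} to go from singular-value domination back to an orbit element.

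First I would handle $\orb(a;L_0(0,\infty),L_\infty(0,\infty))\subset \cKO(a;L_0(0,\infty),L_\infty(0,\infty))$. Let $x\in \orb(a;L_0(0,\infty),L_\infty(0,\infty))$. Specializing Proposition \ref{prop:6.1} to $\cM=L_\infty(0,\infty)$ yields
\[
\mu(s;x)\le \|x\|_{\orb}\,\mu\!\left(\tfrac{s}{\|x\|_{\orb}};a\right),\qquad s>0,
\]
which is exactly condition (1) of Proposition \ref{prop:6.5} with $C=\|x\|_{\orb}$. The implication (1)$\Rightarrow$(2) of Proposition \ref{prop:6.5} then gives $\sup_{t>0}K_t(x)/K_t(a)<\infty$, i.e.\ $x\in \cKO(a;L_0(0,\infty),L_\infty(0,\infty))$.

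For the reverse inclusion $\cKO(a;L_0(0,\infty),L_\infty(0,\infty))\subset \orb(a;L_0(0,\infty),L_\infty(0,\infty))$, I would start from $x\in\cKO(a;L_0(0,\infty),L_\infty(0,\infty))$, which by definition means $\sup_{t>0}K_t(x)/K_t(a)<\infty$. Applying the implication (2)$\Rightarrow$(1) of Proposition \ref{prop:6.5} produces a constant $C>0$ such that $\mu(s;x)\le C\mu(s/C;a)$ for all $s>0$. At this point I would invoke \cite[Theorem 1]{Astashkin}, which asserts that such a singular-value domination is sufficient to construct a bounded homomorphism $T$ on the pair $(L_0(0,\infty),L_\infty(0,\infty))$ satisfying $Ta=x$. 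This places $x$ in $\orb(a;L_0(0,\infty),L_\infty(0,\infty))$ and closes the argument.

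No real obstacle arises here: both directions reduce to a direct appeal to Proposition \ref{prop:6.5} composed with, respectively, Proposition \ref{prop:6.1} (for $\subset \cKO$) and the cited result of Astashkin (for $\subset \orb$). The only subtlety worth flagging is bookkeeping of the constant $C$ passing through Proposition \ref{prop:6.5}: the proof of (2)$\Rightarrow$(1) in Proposition \ref{prop:6.5} yields the concrete bound $\mu(s;x)\le 3C\mu(s/(3C);a)$, so the homomorphism produced by \cite[Theorem 1]{Astashkin} depends on this enlarged constant, but this has no effect on membership in the orbit.
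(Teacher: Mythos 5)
Your proposal is correct and follows essentially the same route as the paper, which deduces the corollary immediately from Proposition \ref{prop:6.5} combined with \cite[Theorem 1]{Astashkin}; your explicit two-inclusion bookkeeping (using Proposition \ref{prop:6.1} for the easy inclusion and noting the harmless enlargement of the constant to $3C$) just spells out what the paper leaves implicit.
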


It is known that there cannot in general be a conditional expectation from $S(\cM,\tau)$ onto a subalgebra of $S(\cM,\tau)$  (see e.g. \cite[Appendix B]{DSZ}), which is the main obstacle in extending \cite[Theorem 1]{Astashkin} to the non-commutative case.
The following theorem is the last result of this section, giving a non-commutative version of \cite[Theorem 1]{Astashkin} in the setting of non-atomic finite factors by approaches which are completely different from those used in \cite{Astashkin}.

For the sake of convenience, we denote $L_\infty(0,\tau(s(X)))$ by $M_{\mu(X)}$.
If $\cM$ is a  non-atomic semifinite von Neumann algebra, then for every $X\in S_0(\cM,\tau)$, there exists a non-atomic commutative von Neumann subalgebra $\cM_{|X|}$ in $s(|X| )\cM s(|X|)$ and a trace-preserving $*$-isomorphism $J$ from $S(\cM_{|X|},\tau)$ onto the algebra $S(M_{\mu(X)},m)$ \cite{CKS,DDP1992}.

\begin{theorem}\label{th:6.5}
If $\cM$ is a non-atomic finite von Neumann factor with a faithful normal finite trace $\tau$, then for every $0 \ne A\in S(\cM,\tau)$,
$\orb(A;L_0(\cM,\tau),L_\infty(\cM,\tau))$ is the set of all $X\in S(\cM,\tau)$ for which there exists $C>0$ such that
\begin{align}\label{CCC}
\mu(t;X) \le C \mu(\frac{t}{C};A),~t>0.
\end{align}
In particular,
$\orb(A;L_0(\cM,\tau),L_\infty(\cM,\tau)) = \cKO(A;L_0(\cM,\tau),L_\infty(\cM,\tau))$.
\end{theorem}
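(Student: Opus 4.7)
The forward implication follows directly from Proposition~\ref{prop:6.1}: if $X\in\orb(A;L_0(\cM,\tau),L_\infty(\cM,\tau))$, then $\mu(t;X)\le\|X\|_\orb\,\mu(t/\|X\|_\orb;A)$ for every $t>0$, which is \eqref{CCC} with $C=\|X\|_\orb$. Combined with Proposition~\ref{prop:6.5}, this already gives the inclusion $\orb(A)\subset\cKO(A)$.

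The substance of the theorem is the reverse inclusion: given $A,X\in S(\cM,\tau)$ satisfying \eqref{CCC}, one must produce a homomorphism $T:S(\cM,\tau)\to S(\cM,\tau)$, bounded on $(L_0(\cM,\tau),L_\infty(\cM,\tau))$ with constants depending only on $C$, such that $TA=X$. The plan is to transfer the commutative construction of \cite[Theorem~1]{Astashkin} to $\cM$ via a diagonalization in a MASA. Because $\cM$ is a non-atomic finite factor, there exist a maximal abelian $*$-subalgebra $\cD\subset\cM$ together with a trace-preserving $*$-isomorphism $J:\cD\to L_\infty(0,\tau(\mathbf{1}))$, and any two positive elements of $S(\cM,\tau)$ sharing the same generalized singular value function are unitarily equivalent. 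Using the latter together with the polar decompositions $A=V_A|A|$ and $X=V_X|X|$, choose unitaries $U_A,U_X\in\cM$ so that $U_A^*|A|U_A,\,U_X^*|X|U_X\in\cD$ and are mapped to $\mu(A),\mu(X)$ respectively under $J$.

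Next, apply \cite[Theorem~1]{Astashkin} to the commutative data $(\mu(A),\mu(X))$; by Proposition~\ref{prop:6.5}, condition \eqref{CCC} is exactly what is required. This produces a bounded additive map $\tilde T$ on $S(0,\tau(\mathbf{1}))$ with $\tilde T(\mu(A))=\mu(X)$ and with $\|\tilde T\|_{L_0\to L_0}$ and $\|\tilde T\|_{L_\infty\to L_\infty}$ depending only on $C$. Transporting $\tilde T$ through $J$ yields a homomorphism $\hat T$ on $S(\cD,\tau)$ sending $U_A^*|A|U_A$ to $U_X^*|X|U_X$. It remains to extend $\hat T$ to a homomorphism $T$ on all of $S(\cM,\tau)$ which still sends $A$ to $X$ and remains bounded on both $(L_0(\cM,\tau),L_\infty(\cM,\tau))$. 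The naive candidate $T(Y):=V_XU_X\,\hat T(\mathbb{E}_\cD(U_A^*V_A^*YU_A))\,U_X^*$ built from the trace-preserving conditional expectation $\mathbb{E}_\cD:\cM\to\cD$ does deliver $TA=X$ and controls $\|T(Y)\|_\infty$, but $\mathbb{E}_\cD$ can enlarge the support projection of $Y$ and is therefore \emph{not} bounded with respect to $\|\cdot\|_0$.

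This mismatch is the main obstacle, and it is precisely the one flagged in the paragraph preceding the theorem (and the reason no analogous result is available for arbitrary semifinite $\cM$). My plan to circumvent it is to avoid $\mathbb{E}_\cD$ altogether and instead build $T$ through a spectrally-adapted conjugation: decompose $Y\in\cF(\tau)$ along the spectral projections of $|A|$ and exploit the abundance of partial isometries between equi-trace projections in a non-atomic finite factor to route each piece through $\hat T$ in a manifestly support-controlled way, so that the $\|\cdot\|_0$-bound drops out from the explicit form of the conjugating partial isometries rather than from any averaging. One then extends from $\cF(\tau)$ to $S(\cM,\tau)$ by density (Theorem~\ref{th:embedding}) using completeness with respect to $\|\cdot\|_S$ (Proposition~\ref{prop:3.3}) and the equivalence of $\|\cdot\|_S$ with the measure topology (Proposition~\ref{prop:SandM}). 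The equality $TA=X$ and the bounds on $T$ then follow from the compatibility of the conjugating unitaries with the commutative bounds on $\tilde T$, and the ``in particular'' statement follows by combining this with the forward direction and Proposition~\ref{prop:6.5}.
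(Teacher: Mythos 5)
The forward inclusion via Propositions \ref{prop:6.1} and \ref{prop:6.5} is fine, but your reverse inclusion has a genuine gap at two points. First, the claim that any two positive elements of $S(\cM,\tau)$ with the same singular value function are unitarily equivalent --- and hence that one can choose unitaries $U_A,U_X$ conjugating $|A|$ and $|X|$ into one fixed maximal abelian subalgebra $\cD$ --- is false in a non-atomic finite factor: if $u|X|u^*\in\cD$ then $uW^*(|X|)u^*\subseteq\cD$, so taking $|X|$ to be a positive generator of a singular MASA and $\cD$ a Cartan MASA (both exist, e.g., in the hyperfinite $II_1$ factor, and both admit generators with the same, say uniform, distribution) shows no such unitary exists. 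What is actually available, and what the paper uses via \cite[Lemma 1.3]{CKS}, is only that $|A|$ and $|X|$ each lie in their \emph{own} non-atomic abelian subalgebra $\cM_{|A|}$, $\cM_{|X|}$ trace-isomorphic to a function algebra; the two algebras are then connected not by a unitary but by finitely many partial isometries between projections of equal trace, which is all the finite-factor assumption gives.

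Second, and more seriously, the step where the real obstacle (no support-bounded conditional expectation, as you correctly note) must be overcome is not carried out: ``decompose $Y$ along the spectral projections of $|A|$ and route each piece through $\hat T$ in a manifestly support-controlled way'' restates the goal rather than giving a construction, and it is unclear how Astashkin's commutative operator $\tilde T$ could act on the off-diagonal part of $Y$ relative to $\cD$ at all. The paper's proof does not transfer the commutative operator of \cite[Theorem 1]{Astashkin}; it builds $T$ directly as a finite sum of two-sided multiplications, $TZ=U_XB_\Delta\bigl(\sum_{j=0}^{2C-1}U_j^*(A_\Delta U_A^*Z)U_j\bigr)$, where $A_\Delta$ discretizes $|A|$ into a step function $B_1$, the partial isometries $U_{nj}$ (hence $U_j$) match the equal-trace projections $E^{|A|}(n\varepsilon,(n+1)\varepsilon]$ with interval projections in $\cM_{|X|}$, the normalized form of \eqref{CCC} (namely $2C\mu(t/2C;A)-\mu(t;X)\ge 1$ on the support of $\mu(X)$) guarantees that $B_2=\sum_jU_j^*B_1U_j$ dominates $\mu(X)$ up to the factor $2C$, and $B_\Delta=J_X(\mu(X)/\mu(B_2))$ is a bounded correction forcing $TA=X$. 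Each summand is a two-sided multiplication by fixed bounded operators, hence trivially bounded on both $L_0(\cM,\tau)$ and $\cM$, so no conditional expectation and no density/extension argument (with its own uniformity issues) is needed. Until you exhibit an explicit map with these properties, the reverse inclusion --- the substance of the theorem --- is not established.
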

\begin{proof}
(i) It follows from Proposition \ref{prop:6.1} that for every $X\in \orb(A;L_0(\cM,\tau),L_\infty(\cM,\tau))$, there exists such a $C>0$ satisfying \eqref{CCC}.

(ii) Conversely, assume that $A,X\in S(\cM,\tau)$ satisfies \eqref{CCC}.

Then, by \cite[Lemma 1.3]{CKS}, there are isomorphisms  $J_A$ between $S(M_{\mu(A)},m)$ and $S(\cM_{|A|},\tau)$ such that $J_A \mu(A)=|A|$  and $J_X$ between $S(M_{\mu(X)},m)$ and $S(\cM_{|X|},\tau)$ such that $J_X\mu(X)=|X|$.

Then, by \eqref{CCC},
 we have
$$ 2C \mu(\frac{t}{2C};A )-\mu(t;x)  \ge C \mu(\frac{t}{2C};A ) + (C \mu(\frac{t}{2C};A )-\mu(t;x))  \ge C \mu(\frac{\tau(s(X))}{2C};A )$$
for every $t\in (0, \tau(s(X)))$.
Without loss of generality, we can assume that $C$ is an  integer which is large enough such that
\begin{align}\label{ineq2C}
2C \mu(\frac{t}{2C};A )-\mu(t;X)\ge C \mu(\frac{\tau(s(X))}{2C};A ) \ge 1
\end{align}
for every $t\in (0, \tau(s(X)))$.

Let $\varepsilon<\frac{1}{2C}$.
Since $J_A \mu(A) = |A|$, we can define $A_n:= [a_n,b_n)$, $n\ge 0$, by
$$\chi_{A_n}= \chi_{ [a_n,b_n)}=J_A^{-1} (E^{|A|} (n\varepsilon,(n+1)\varepsilon]).$$
For every $ 0\le j\le 2C-1$, we set
$$[a_{nj},b_{nj}) := [2C a_n  +  j(b_n -a_n ), 2C a_n  +  (j+1)(b_n -a_n )). $$

By \cite[Lemma 1.3]{CKS},  for every  $ [a_{nj},b_{nj})\subset [0,\tau(s(X)))$, we have
\begin{align}\label{16}
\tau(E^{|A|} (n\varepsilon,(n+1)\varepsilon])=\tau(J_A \chi_{[a_n,b_n)}) = \tau(J_X  \chi_{[a_{nj},b_{nj})}).
\end{align}
Since $\cM$ is a finite factor, due to \eqref{16}, there exist  partial isometries $U_{nj} \in \cM$ such that
$U_{nj}U_{nj}^* = E^{|A|}(n\varepsilon,(n+1)\varepsilon]=J_A \chi_{[a_n,b_n)} $
and
$U_{nj}^*U_{nj} = J_X  \chi_{[a_{nj},b_{nj})} $.
If $ [a_{nj},b_{nj}) \cap [0,\tau(s(X)))=\varnothing$, we define $U_{nj}:=0$.
In the case when $ [a_{nj},b_{nj})\nsubseteq [0,\tau(s(X)))$ but $ [a_{nj},b_{nj}) \cap [0,\tau(s(X)))\ne \varnothing $,
we define $U_{nj}$ as  the partial isometry such that
$U_{nj}U_{nj}^* =J_A \chi_{[a_n, a_n -a_{nj} + \tau(s(X))) }\le E^{|A|}(n\varepsilon,(n+1)\varepsilon] $
and
$U_{nj}^*U_{nj} = J_X  \chi_{[a_{nj},\tau(s(X)))} $.

Denote $U_j :=\sum_{n=0}^\infty U_{nj}$, $0\le j\le 2C-1$ (note that $U_{kj}^* U_{lj}=0$ and $U_{lj} U_{kj} ^* =0$ whenever $k\ne l$).
Note that every $U_j$ is a partial isometry.
Let
$$B_1:=\sum_{n=1}^{\infty} n\varepsilon E^{|A|}(n\varepsilon,(n+1)\varepsilon]=\sum_{n=1}^{\infty} n\varepsilon J_A \chi_{[a_n,b_n)} \in S(\cM_{|A|},\tau) $$
and
$$B_2:= \sum_{n,j}n\varepsilon J_X  ( \chi_{[a_{nj},b_{nj})}\chi_{[0,\tau(s(X)))} )\in S(\cM_{|X|},\tau).$$
Then, noting that $U_{n,j}^*  E^{|A|}(n\varepsilon,(n+1)\varepsilon]U_{n,j}= U_{n,j}^* J_A \chi_{[a_n,b_n)} U_{n,j} = J_X  ( \chi_{[a_{nj},b_{nj})}\chi_{[0,\tau(s(X)))} )$ for every $n,j$, we have
$$
B_ 2 =  \sum_{n,j}  U_{nj} ^*  n\varepsilon E^{|A|}(n\varepsilon,(n+1)\varepsilon] U _{nj}= \sum_{j=0}^{2C-1} U_{j} ^* B_1 U _{j}.
$$
Since $\varepsilon<\frac{1}{2C}$, it follows that $0 \le  \mu(A) - \mu(B_1 ) <\frac{1}{2C}$ and therefore, by \eqref{ineq2C}, we have
\begin{align}\label{17}
2C  \mu(t;B_2 )  -\mu(t;X) &=2C  \mu(\frac{t}{2C};B_1 )  -\mu(t;X) \nonumber\\
&>    2C (\mu(\frac{t}{2C};A ) -\frac{1}{2C})-\mu(t;X)   \ge 0
\end{align}
for every $t\in [ 0,  \tau(s(X)))$ (note that the $  \mu(t;B_2 )  =\mu(\frac{t}{2C};B_1 )$ follows immediately from the definitions of $B_1$ and $B_2$).

Let $A_\Delta:= \int  \frac{[\lambda/\varepsilon] \varepsilon }{\lambda}  dE ^{|A|}(\lambda)$
and $B_\Delta:= J_X \frac{\mu(X)}{\mu(B_2)}$.
Here, $[\lambda/\varepsilon]$ is the  integer part of $\lambda/\varepsilon$.
Note that \eqref{17} implies that  $B_\Delta$ is a bounded operator.
Clearly, we have $A_\Delta |A| = B_1$ and $|X| =J_X \frac{\mu(X)}{\mu(B_2)} J_X \mu(B_2)= B_\Delta B_2 $ (notice that $\sum_{n,j}n\varepsilon  \chi_{[a_{nj},b_{nj})}\chi_{[0,\tau(s(X)))} =\mu(B_2)$).
Let $U_A |A| = A$ and $U_X|X| =X$ be the polar decompositions.
Define a homomorphism $T :S(\cM,\tau) \rightarrow S(\cM,\tau) $ by setting
$$ T Z =U_X  B_\Delta \Big(  \sum_{j=0}^{2C-1}U_j^*( A_\Delta  U_A^* Z )U_j \Big)  , ~ Z\in S(\cM,\tau). $$
It is easy to verify that $TA=X$
 and $T$ is a bounded homomorphism  on the pair  $(L_0(\cM,\tau),\cM)$ (one should note that operators of multiplication by  $U_j$, $0\le j\le 2C-1$, $A_\Delta$ and $B_\Delta$ are  bounded homomorphisms on the pair  $(L_0(\cM,\tau),\cM)$  by Corollary \ref{cor:inter}).

The last statement follows immediately  from Proposition \ref{prop:6.5}.
\end{proof}

\begin{remark}
The assumption that $\cM$ is a finite factor plays a crucial role in the above proof.
The authors did not succeed in extending the result to the case for general semifinite von Neumann algebras.
\end{remark}
%%%%%%%%%%%%%%%%%%%%%%%%%%%%%%%%%%%%%%%%%%%%%%%%%%%%%%%%%%%%%%
%%%%%%%%%%%%%          Acknowledgements         %%%%%%%%%%%%%%
%%%%%%%%%%%%%%%%%%%%%%%%%%%%%%%%%%%%%%%%%%%%%%%%%%%%%%%%%%%%%%
%%%%%%%%%%%%%%%%%%%%%%%%%%%%%%%%%%%%%%%%%%%%%%%%%%%%%%%%%%%%%%
{\bf Acknowledgements}
The authors would like to thank Sergei Astashkin and Dima Zanin for  helpful discussions.

The first author acknowledges the support of University International Postgraduate Award (UIPA).
The second author was supported by the Australian Research Council.

%%%%%%%%%%%%%%%%%%%%%%%%%%%%%%%%%%%%%%%%%%%%%%%%%%%%%%%%%%%%%%
%%%%%%%%%%%%%          thebibliography          %%%%%%%%%%%%%%
%%%%%%%%%%%%%%%%%%%%%%%%%%%%%%%%%%%%%%%%%%%%%%%%%%%%%%%%%%%%%%

\end{document}